\documentclass{article}

\usepackage{amsmath}
\usepackage{amssymb}
\usepackage{amsthm}
\usepackage{enumerate}

\newcommand{\paren}[1]{\left( #1 \right)}
\newcommand{\angles}[1]{\left\langle #1 \right\rangle}
\newcommand{\curly}[1]{ \left\{ #1 \right\}}
\newcommand{\abs}[1]{\left| #1 \right|}
\newcommand{\brac}[1]{\left[ #1 \right]}
\newcommand{\Q}{\mathbb{Q}}
\newcommand{\forces}{\Vdash}

\newtheorem{theorem}{Theorem}
\newtheorem{defn}[theorem]{Definition}
\newtheorem{claim}[theorem]{Claim}
\newtheorem{lem}[theorem]{Lemma}
\newtheorem{question}[theorem]{Question}
\newtheorem{corollary}[theorem]{Corollary}

\DeclareMathOperator{\otp}{otp}
\DeclareMathOperator{\cf}{cf}
\DeclareMathOperator{\dom}{dom}
\DeclareMathOperator{\Lim}{Lim}

\title{Chang's Conjecture with $\square_{\omega_1, 2}$ from an $\omega_1$-Erd\H{o}s Cardinal}
\author{Itay Neeman\footnote{This material is based upon work supported by the National Science Foundation under Grants No. DMS-1363364 and DMS-1764029.}\; and John Susice\footnotemark[\value{footnote}]}

\begin{document}
\maketitle{}

\begin{abstract}
	Answering a question of Sakai \cite{cc_weak_square}, we show that the existence of an $\omega_1$-Erd\H{o}s 
cardinal suffices to obtain the consistency of Chang's Conjecture with $\square_{\omega_1, 2}$.  
By a result of Donder \cite{some_apps_of_core_model} this is best possible. 

We also give an answer to another question of Sakai relating to the incompatibility 
of $\square_{\lambda, 2}$ and $(\lambda^+, \lambda) \twoheadrightarrow (\kappa^+, \kappa)$
for uncountable $\kappa$. 
\end{abstract}

\section{Introduction}
Chang's Conjecture is a model-theoretic principle asserting a strengthening of 
the L\"owenheim-Skolem Theorem \cite{note_on_two_cardinal_problem}. 
Chang's Conjecture was originally shown to be consistent assuming the existence of a 
Ramsey cardinal by Silver (see \cite{evolution_large_cardinal_axioms}) and this 
assumption was later weakened to the existence of an $\omega_1$-Erd\H{o}s cardinal 
\cite{donder_levinski}. This result is best possible, since Chang's Conjecture implies 
that $\omega_2$ is $\omega_1$-Erd\H{o}s in the core model \cite{some_apps_of_core_model}. 

Chang's Conjecture is known to be incompatible with Jensen's square principle $\square_{\omega_1}$
(see \cite{walks_on_ordinals}) but was recently shown to be consistent with 
Schimmerling's square principle $\square_{\omega_1, 2}$ by Sakai 
\cite{cc_weak_square}, assuming the existence of a measurable cardinal. 
In light of this consistency upper bound, Sakai posed the following: 

\begin{question}
	What is the consistency strength of the conjunction of 
	Chang's Conjecture with $\square_{\omega_1, 2}$? 
\end{question}

In Corollary \ref{obvious_corollary} we show that 
the consistency of the given statement follows from the existence of an 
$\omega_1$-Erd\H{o}s cardinal, answering Sakai's question. 
Section \ref{prelims} of the paper will cover some basic preliminaries, 
such as the definition of the relevant square principle and large cardinal. 
In Section \ref{the_poset} we describe our forcing poset. In Silver's 
consistency proof, he used what is now called a Silver forcing poset--a 
modification of the Levy Collapse forcing which allows larger supports 
\cite{evolution_large_cardinal_axioms}. 
Cummings and Schimmerling \cite{indexed_squares} have introduced another 
variant of the Levy Collapse forcing which collapses inaccessible 
$\kappa$ to $\omega_2$ while simultaneously adjoining a square sequence. 
Our forcing will be a hybrid of these two posets--in other words it will be 
a ``Silverized'' Cummings-Schimmerling poset. 

Finally, in Section \ref{the_proof} we give the proof of our result, 
which is based on the methods of \cite{cc_weak_square} and 
\cite{donder_levinski}. 


In Section \ref{second_section} we investigate the relation between weak square principles and model theoretic 
transfer properties (i.e., generalizations of Chang's Conjecture) of the form 
$\paren{\lambda^+, \lambda} \twoheadrightarrow \paren{\kappa^+, \kappa}$ for $\kappa \geq \aleph_1$. 
Sakai proved the following: 

\begin{theorem}[Sakai, \cite{cc_weak_square}]
	Suppose that $\paren{\lambda^+, \lambda} \twoheadrightarrow \paren{\kappa^+, \kappa}$, 
	where $\kappa$ is an uncountable cardinal and $\lambda$ is a cardinal $> \kappa$. 
	Moreover, suppose that either of the following holds: 
	\begin{enumerate}[(I)]
		\item $\lambda^{< \lambda} = \lambda$
		\item $\kappa < \aleph_{\omega_1}$, and there are strictly more regular cardinals 
			in the interval $\brac{\aleph_0, \kappa}$ than in the interval 
		$(\kappa, \lambda]$. 
	\end{enumerate}
	Then $\square_{\lambda, \kappa}$ fails. 
	\label{sakais_constraint}
\end{theorem}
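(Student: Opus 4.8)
The plan is to argue by contradiction: assume $\square_{\lambda,\kappa}$ holds, fixed by a sequence $\vec{\mathcal C} = \angles{\mathcal C_\alpha : \alpha \in \Lim(\lambda^+)}$ (each $\mathcal C_\alpha$ a nonempty family of at most $\kappa$ clubs in $\alpha$, all of order type $\le \lambda$, with $C \cap \beta \in \mathcal C_\beta$ whenever $C \in \mathcal C_\alpha$ and $\beta$ is a limit point of $C$), and derive a contradiction with $(\lambda^+,\lambda)\twoheadrightarrow(\kappa^+,\kappa)$. First I would fix a structure $\mathfrak A$ of type $(\lambda^+,\lambda)$: universe $\lambda^+$, distinguished predicate $P = \lambda$, the ordering, predicates for $\kappa$ and $\kappa^+$, a ternary relation coding $\vec{\mathcal C}$ (after choosing enumerations $\angles{C^\alpha_i : i < \sigma_\alpha}$ with $\sigma_\alpha \le \kappa$), functions giving the transitive collapses of the $C^\alpha_i$, their inverses, and their order types, and Skolem functions, all in a countable language. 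Such an $\mathfrak A$ exists outright, so the transfer hypothesis supplies $\mathfrak B \prec \mathfrak A$ with $\abs{B} = \kappa^+$ and $\abs{B \cap \lambda} = \kappa$. Since $\lambda$ is definable in $\mathfrak A$ we have $\lambda \in B$ and hence $\delta := \sup(B \cap \lambda^+) \in (\lambda,\lambda^+)$; and on replacing $\delta$ by the supremum of the first $\kappa^+$ elements of $B \cap \lambda^+$ we may assume in addition that $\cf(\delta) = \kappa^+$ and that $B \cap \delta$ is cofinal in $\delta$ of order type $\kappa^+$.

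The heart of the argument is then to confront the family $\mathcal C_\delta$ --- at most $\kappa$ clubs in $\delta$, each of order type $\le \lambda$ --- with the $\kappa^+$-sized set $B \cap \delta$. The idea is a coherence descent: one fixes $C \in \mathcal C_\delta$ and seeks club-many points $\gamma < \delta$ that are simultaneously limit points of $C$ and recognized by $\mathfrak B$; at each such $\gamma$ coherence gives $C \cap \gamma \in \mathcal C_\gamma$, so --- $\mathfrak B$ being closed under the collapse functions and their inverses --- $\mathfrak B$ contains the ordinal $\otp(C \cap \gamma)$. These order types are then $\kappa^+$-many distinct ordinals below $\otp(C) \le \lambda$, forcing $\abs{B \cap \lambda} > \kappa$, which is absurd; and should the recognition step fail one reads off instead that $\otp(C) > \lambda$, contradicting $\square_{\lambda,\kappa}$. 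Making this precise is delicate: $B \cap \delta$ need not be closed in $\delta$, so the required coherence points need not lie in $B$, and one must either pass to an internally approachable elementary substructure (or a short elementary chain of transfer witnesses) so that a club of coherence points does land in $B$, or replace this by a walks-on-ordinals computation avoiding the closure; and one must also tame, by a pressing-down argument, the $<\kappa$-valued index of $C \cap \gamma$ inside $\mathcal C_\gamma$ so that $\mathfrak B$ genuinely recognizes it. I expect arranging the recognition of this coherence data to be the main obstacle in the regular case.

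Finally, the hypotheses (I) and (II) are what make the bookkeeping close. Under (I), $\lambda^{<\lambda} = \lambda$ forces $\lambda$ regular, so $\sup(B \cap \lambda) < \lambda$ and the counting of order types below $\lambda$ is clean, while the surplus $2^{<\lambda} = \lambda$ supplies whatever auxiliary structure on $\mathfrak A$ the recognition step needs. Under (II), $\lambda$ may be singular and $\sup(B \cap \lambda)$ may equal $\lambda$, so the count of bare ordinals below $\lambda$ must be carried out instead inside a PCF scale for the regular cardinals of the interval $(\kappa,\lambda]$; the hypothesis that $[\aleph_0,\kappa]$ contains strictly more regular cardinals than $(\kappa,\lambda]$, together with $\kappa < \aleph_{\omega_1}$, is exactly what leaves a regular cardinal not consumed by that interval --- which is needed both to make the scale-count go through and to support an induction on the length of $(\kappa,\lambda]$. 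Carrying the argument through uniformly, and in particular getting the PCF version right under (II) where the transfer property must be played against the scale rather than against $\lambda$ directly, is where I expect the real work to lie.
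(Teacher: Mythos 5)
First, a point of orientation: this paper does not prove Theorem \ref{sakais_constraint} at all --- it is quoted from Sakai \cite{cc_weak_square} as background --- so the natural in-paper comparison is with the closely related Theorem \ref{incompatibility_theorem} and its proof, together with Lemmas \ref{equivalence_of_cc_varieties1} and \ref{equivalence_of_cc_varieties2} (which invoke Sakai's Lemma 4.15, the device by which hypotheses like (I) and (II) upgrade $\twoheadrightarrow$ to $\twoheadrightarrow_\tau$ or $\twoheadrightarrow_\kappa$). Measured against that, your proposal has a genuine gap at its central step. Your plan is to harvest, from coherence at limit points $\gamma$ of a fixed $C \in \mathcal{C}_\delta$, the ordinals $\otp(C \cap \gamma)$ as $\kappa^+$-many distinct elements of $B \cap \lambda$. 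But the points $\gamma$ at which coherence is available to $\mathfrak{B}$ are generically unavailable: limit points of $C$ that are limits of $B \cap \delta$ typically do not lie in $B$, and points of $B$ are typically not limit points of $C$. Indeed, in the scenario you are trying to refute, exactly the opposite of your harvesting claim holds and is easy to prove: since $\abs{B \cap \lambda} = \kappa$ and the order types $\otp(C \cap \gamma)$ are distinct ordinals below $\lambda$, at most $\kappa$ of them can lie in $B$, and (given enough of $\kappa$ inside $B$) eventually \emph{no} limit point of $C$ lies in $B$ --- these are precisely Claims \ref{ordertypes_not_in_m} and \ref{ordinals_not_in_m} in the proof of Theorem \ref{incompatibility_theorem}, where they are stepping stones toward a quite different contradiction. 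So the ``recognition'' problem you flag as delicate is not a technical loose end to be patched by internal approachability, walks, or pressing down; it is the entire theorem, and none of the suggested patches is carried out or obviously implementable. A correct argument (both Sakai's and the variant proved in this paper) abandons the order-type count and instead works with, e.g., the least element $\alpha^{\uparrow}$ of the substructure above a point $\alpha$, a comparison of two distinct cofinalities arising as $\cf(\alpha^{\uparrow})$, and a set $U$ definable in the substructure whose supremum lands below $\alpha_1^{\uparrow}$, contradicting membership of $\sup(C^* \cap \alpha_1^{\uparrow})+1$ in $U$.

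A second, related gap is the use of the hypotheses (I) and (II). In your sketch they enter only as slogans (``supplies whatever auxiliary structure the recognition step needs''; a speculative PCF scale under (II)). In the actual arguments their role is concrete and indispensable: they allow one to improve the Chang substructure so that a suitable $\tau$ (ultimately enough of $\kappa$) is contained in it --- this is Sakai's Lemma 4.15, reflected here in Lemmas \ref{equivalence_of_cc_varieties1} and \ref{equivalence_of_cc_varieties2} --- and it is that containment which lets the substructure identify which member of the $\le \kappa$-sized family $\mathcal{C}_\gamma$ is relevant and makes claims like Claim \ref{ordinals_not_in_m} available. Without proving such an upgrading lemma (under (I) via $\lambda^{<\lambda} = \lambda$, under (II) via the count of regular cardinals and $\kappa < \aleph_{\omega_1}$), and without replacing the order-type harvest by an argument of the $\alpha^{\uparrow}$/two-cofinalities type, the proposal does not yield a proof. (Minor: the replacement of $\delta$ by the supremum of the first $\kappa^+$ elements of $B \cap \lambda^+$ is unnecessary --- by Lemma \ref{otp_chang_substr_lem} one automatically has $\otp(B \cap \lambda^+) = \kappa^+$.)
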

Although Theorem \ref{sakais_constraint} imposes substantial constraints on the interaction of weak 
square principles and model theoretic transfer properties, there are many instances 
where it does not apply. For example, it does not answer the question of whether 
$(\aleph_4, \aleph_3) \twoheadrightarrow (\aleph_2, \aleph_1)$ is incompatible with 
$\square_{\omega_3, 2}$ when $2^{\aleph_2} > \aleph_3$.

In light of these limitations, Sakai posed the following question: 
\begin{question}[Sakai, \cite{cc_weak_square}]
	Let $\kappa$ be an uncountable cardinal and $\lambda$ a cardinal $> \kappa$. 
	Does $\paren{\lambda^+, \lambda} \twoheadrightarrow \paren{\kappa^+, \kappa}$ imply 
	the failure of $\square_{\lambda, 2}$? 
\end{question}
We answer this question in the affirmative in Corollary \ref{zeroeth_big_corollary} (in fact we 
obtain the failure of $\square_{\lambda, \omega}$ under these hypotheses and more under slightly 
stronger hypotheses--see Corollaries \ref{first_big_corollary} and \ref{second_big_corollary}). 
Taking $\kappa = \aleph_1$, $\lambda = \aleph_3$ in this theorem shows that indeed 
$(\aleph_4, \aleph_3) \twoheadrightarrow (\aleph_2, \aleph_1)$ is incompatible 
with $\square_{\omega_3, 2}$, regardless of the value of $2^{\aleph_2}$.

\section{The Consistency of Chang's Conjecture and $\square_{\omega_1, 2}$ from an $\omega_1$-Erd\H{o}s Cardinal}
\label{consistency_section}

\subsection{Preliminaries} \label{prelims}

In the following, for any cardinal 
$\theta$ we denote by $H\!\paren{\theta}$ the collection of 
all sets whose transitive closure has size $< \theta$. We frequently 
confuse a structure and its underlying set. I.e., if $\mathcal{M} = 
\angles{M, \dots}$ is a structure and $\alpha$ is an ordinal, we 
write $\alpha \subseteq \mathcal{M}$ to mean $\alpha \subseteq M$. All structures 
we consider have at most countably many symbols in their signature. 

\begin{defn}
	\emph{Chang's Conjecture} is the assertion that for any 
	structure $\mathcal{N}$ with $\omega_2 \subseteq \mathcal{N}$, 
	there exists 
	$\mathcal{M} \preceq \mathcal{N}$ such that 
	$\abs{\mathcal{M}} = \aleph_1$ and 
	$\abs{\mathcal{M} \cap \omega_1} = \aleph_0$. 
\end{defn}
We observe that to verify Chang's Conjecture it suffices to verify 
it for models with underlying set $H\!\paren{\omega_2}$: 

\begin{claim}[Folklore]
	Suppose that for all structures $\mathcal{H} = 
	\langle H\!\paren{\omega_2}, \dots \rangle$ there exists 
	$\mathcal{M} \preceq \mathcal{H}$ of cardinality 
	$\aleph_1$ such that $\abs{\mathcal{M} \cap \omega_1} = 
	\aleph_0$. Then Chang's Conjecture holds. 
	\label{equivalent_formulation_claim}
\end{claim}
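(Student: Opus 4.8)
The plan is to reduce an arbitrary structure $\mathcal{N}$ with $\omega_2 \subseteq \mathcal{N}$ to a structure on $H\!\paren{\omega_2}$ by absorbing $\mathcal{N}$ into a large enough $H\!\paren{\theta}$, taking an elementary submodel, and then transitively collapsing. First I would fix a structure $\mathcal{N} = \angles{N, \dots}$ with $\omega_2 \subseteq N$ and, without loss of generality, assume $\mathcal{N}$ has Skolem functions (expand by them; this is harmless since we only added countably many symbols). Choose a regular cardinal $\theta$ large enough that $\mathcal{N} \in H\!\paren{\theta}$, and consider the structure $\mathcal{A} = \angles{H\!\paren{\theta}, \in, \mathcal{N}, <^*, \dots}$ where $<^*$ is a well-order of $H\!\paren{\theta}$ and the "$\dots$" includes a predicate for (the graph of) every function and relation of $\mathcal{N}$, so that $N$ and all its structure are uniformly definable in $\mathcal{A}$. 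Again expand $\mathcal{A}$ by Skolem functions.

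Next I would take an elementary submodel $\mathcal{X} \preceq \mathcal{A}$ with $\abs{X} = \aleph_1$, $\omega_1 \subseteq X$, and $\abs{X \cap \omega_2}$... wait, that is exactly what we are trying to arrange, so instead: let $\mathcal{B} = \angles{H\!\paren{\omega_2}, \in, <^*\!\restriction H\!\paren{\omega_2}, \dots}$ be the natural "reduct" — more precisely, since $\mathcal{N} \in H\!\paren{\theta}$ need not lie in $H\!\paren{\omega_2}$, I would instead use the collapse. Form the Skolem hull $\mathcal{Y} \preceq \mathcal{A}$ of $\omega_2$ (or of any set of size $\aleph_1$ containing $\omega_1$ as a subset), take its transitive collapse $\pi \colon \mathcal{Y} \to \bar{\mathcal{Y}}$; then $\bar{\mathcal{Y}}$ has underlying set $H\!\paren{\omega_2}$ (since $|\mathcal{Y}| = \aleph_2$ and $\mathcal{Y} \cap \omega_2$ is an ordinal of size $\aleph_2$... ) — here I would instead directly define the relevant structure on $H\!\paren{\omega_2}$ as $\mathcal{H} = \angles{H\!\paren{\omega_2}, \in, \mathcal{N}', \dots}$ where $\mathcal{N}'$ codes $\mathcal{N}$ together with enough Skolem machinery; concretely, since $|N| \geq \aleph_2$ one cannot literally put $\mathcal{N}$ inside $H\!\paren{\omega_2}$, so the cleanest route is: apply the hypothesis not to $\mathcal{N}$ but to $\mathcal{H} = \angles{H\!\paren{\omega_2}, \in, \vec{f}}$ where $\vec{f}$ is a countable family of Skolem functions for $\mathcal{A}$ restricted appropriately, obtaining $\mathcal{M}_0 \preceq \mathcal{H}$ of size $\aleph_1$ with $\abs{\mathcal{M}_0 \cap \omega_1} = \aleph_0$, and then push $\mathcal{M}_0$ back up through the elementarity of $H\!\paren{\omega_2}$ in $H\!\paren{\theta}$ to get the desired $\mathcal{M} \preceq \mathcal{N}$.

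Let me state the steps I would actually carry out, in order. (1) Given $\mathcal{N}$, pick $\theta$ regular with $\mathcal{N} \in H\!\paren{\theta}$ and build $\mathcal{A} = \angles{H\!\paren{\theta}, \in, \mathcal{N}, <^*}$ with Skolem functions, observing that $H\!\paren{\omega_2} \in H\!\paren{\theta}$ and $\omega_2 \subseteq H\!\paren{\omega_2}$. (2) Note that $\mathcal{A}_1 := \angles{H\!\paren{\omega_2}, \in, <^*\!\restriction H\!\paren{\omega_2}, \curly{g \restriction H\!\paren{\omega_2}^{<\omega} : g \in \text{Sk}(\mathcal{A})}}$ is a legitimate structure on $H\!\paren{\omega_2}$ with countably many symbols, and that "$\mathcal{A}_1$ is this particular structure" is expressible in $\mathcal{A}$. (3) Apply the hypothesis of the Claim to $\mathcal{A}_1$ to get $\mathcal{M}_1 \preceq \mathcal{A}_1$ with $\abs{\mathcal{M}_1} = \aleph_1$ and $\abs{\mathcal{M}_1 \cap \omega_1} = \aleph_0$. (4) Let $\mathcal{M}$ be the closure of $\mathcal{M}_1$ under the Skolem functions of $\mathcal{A}$; argue $\mathcal{M} \cap H\!\paren{\omega_2} = \mathcal{M}_1$ (since $\mathcal{M}_1$ is already closed under the restricted Skolem functions and these agree on $H\!\paren{\omega_2}$-inputs), so in particular $\mathcal{M} \cap \omega_1 = \mathcal{M}_1 \cap \omega_1$ is countable and $\omega_2 \cap \mathcal{M} \supseteq \mathcal{M}_1 \cap \omega_2$ has size $\aleph_1$; also $\abs{\mathcal{M}} = \aleph_1$. (5) Then $\mathcal{M} \cap N \preceq \mathcal{N}$ because $\mathcal{M} \preceq \mathcal{A}$ and $\mathcal{N}$ with all its functions is definable in $\mathcal{A}$; and $\abs{\mathcal{M} \cap N} = \aleph_1$ while $\abs{(\mathcal{M}\cap N) \cap \omega_1} = \aleph_0$ since $\omega_1 \subseteq N$. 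This gives the witness for Chang's Conjecture applied to $\mathcal{N}$.

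The main obstacle, and the only point requiring real care, is Step (4): verifying that closing $\mathcal{M}_1$ under the \emph{full} Skolem functions of $\mathcal{A}$ does not add any new elements below $\omega_1$ (or indeed any new elements of $H\!\paren{\omega_2}$ beyond $\mathcal{M}_1$). The key observation is that for $a_1, \dots, a_n \in H\!\paren{\omega_2}$, the value $g(a_1, \dots, a_n)$ either lies in $H\!\paren{\omega_2}$ — in which case it equals $(g \restriction H\!\paren{\omega_2}^{<\omega})(a_1,\dots,a_n) \in \mathcal{M}_1$ by closure of $\mathcal{M}_1$ under that restricted function — or it does not, in which case it is not an ordinal below $\omega_1$. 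One then has to check that iterating this (closing under $g$ applied to tuples some of whose entries are outside $H\!\paren{\omega_2}$) still cannot produce new small ordinals; this follows because any ordinal $< \omega_1$ produced at any stage already lies in $H\!\paren{\omega_2}$, hence was produced by an application with all arguments in $H\!\paren{\omega_2} \cap \mathcal{M} = \mathcal{M}_1$, hence lies in $\mathcal{M}_1$. A clean way to package this is to note $\mathcal{M} \cap H\!\paren{\omega_2}$ is itself elementary in $\mathcal{A}_1$ and contains $\mathcal{M}_1$, and conversely is contained in the Skolem hull computed inside $\mathcal{A}_1$, which is $\mathcal{M}_1$; so $\mathcal{M} \cap H\!\paren{\omega_2} = \mathcal{M}_1$ exactly. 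Everything else is bookkeeping about elementary submodels and the definability of $\mathcal{N}$ inside $\mathcal{A}$.
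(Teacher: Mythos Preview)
Your approach can be completed, but the argument you give for Step~(4) has a real gap. You assert that any ordinal $\xi < \omega_1$ appearing in $\mathcal{M}$ ``was produced by an application with all arguments in $H(\omega_2) \cap \mathcal{M} = \mathcal{M}_1$''; but that equality is exactly what you are trying to establish, and the inference is unjustified. An element of $\mathcal{M}$ has the form $t(\vec{a})$ for a Skolem \emph{term} $t$ of $\mathcal{A}$ and $\vec{a} \in \mathcal{M}_1$, and the intermediate values arising in the evaluation of $t$ may leave $H(\omega_2)$ even when the final value $\xi$ lands back in $\omega_1$; so closure of $\mathcal{M}_1$ under the restricted \emph{basic} Skolem functions $g \restriction H(\omega_2)^{<\omega}$ does not by itself give $\xi \in \mathcal{M}_1$. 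Your ``clean way'' likewise asserts $\mathcal{M} \cap H(\omega_2) \preceq \mathcal{A}_1$ without proof. The repair is easy: put into the language of $\mathcal{A}_1$ the restrictions to $H(\omega_2)$ of all Skolem \emph{terms} of $\mathcal{A}$ (equivalently, all parameter-free $\mathcal{A}$-definable functions), encoded as relations; there are still only countably many, and now $\xi = t(\vec{a})$ with $\vec{a} \in \mathcal{M}_1$ and $\xi \in H(\omega_2)$ immediately yields $\xi \in \mathcal{M}_1$.

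The paper avoids all of this with a one-line coding trick. First reduce by L\"owenheim--Skolem so that $\abs{N} = \aleph_2$. Then fix any injection $\pi \colon N \to H(\omega_2)$ which is the identity on $\omega_2$, and let $\mathcal{H} = \langle H(\omega_2), \tilde N, \tilde R_1, \tilde R_2, \dots \rangle$, where $\tilde N$ is a predicate for $\pi[N]$ and each $\tilde R_i$ is the $\pi$-pushforward of $R_i$. The hypothesis applied to $\mathcal{H}$ gives $\mathcal{M} \preceq \mathcal{H}$ of size $\aleph_1$ with $\abs{\mathcal{M} \cap \omega_1} = \aleph_0$, and pulling back through $\pi$ produces the desired Chang submodel of $\mathcal{N}$. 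No $H(\theta)$, no Skolem hulls, no Step~(4).
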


\begin{proof}
	This is a standard model-theoretic argument. Suppose that 
	$\mathcal{N} = \langle N, R_1, R_2, \dots \rangle$ is any 
	structure with $\omega_2 \subseteq N$. We may assume without 
	loss of generality that $\abs{\mathcal{N}} = \aleph_2$. 
	Let $\pi \colon N \rightarrow H\!\paren{\omega_2}$ be any 
	injection which is the identity on $\omega_2$. 
	Let $\mathcal{H} = \langle H\!\paren{\omega_2}, \tilde{N}, 
	\tilde{R}_1, \tilde{R}_2, \dots \rangle$, where 
	$\tilde{N}$ is a predicate representing membership in 
	$\pi\brac{N}$ and $\tilde{R}_i$ is a predicate representing 
	$R_i$ in the natural way. 
	By our assumption there is $\mathcal{M} \preceq 
	\mathcal{H}$ of cardinality $\aleph_1$ such that 
	$\abs{\mathcal{M} \cap \omega_1} = \aleph_0$. Pulling 
	back via $\pi$, we get the desired submodel of $\mathcal{N}$. 
\end{proof}

Square properties are a family of ``incompactness principles''
regarding sequences of clubs. 

\begin{defn}[\cite{combinatorial_principles_in_core_model}]
	Suppose that $\kappa$ is an infinite cardinal 
	and $\lambda$ is a nonzero (but potentially finite) cardinal. 
	A $\square_{\kappa, \lambda}$-sequence is a sequence 
	$\vec{\mathcal{C}} = \angles{\mathcal{C}_{\alpha} \colon 
\alpha < \kappa^+}$ such that: 
\begin{enumerate}
	\item For all $\alpha < \kappa^+$, $1 \leq \abs{\mathcal{C}_{\alpha}} \leq \lambda$. 
	\item For all $\alpha < \kappa^+$ and $C \in \mathcal{C}_{\alpha}$, 
		$C$ is a club subset of $\alpha$ and 
		$\otp{C} \leq \kappa$. 
	\item (Coherence) For all $\alpha < \kappa^+$, every $C \in \mathcal{C}_{\alpha}$
		\emph{threads} $\angles{\mathcal{C}_{\beta} \colon \beta < \alpha}$
		in the sense that $C \cap \beta \in \mathcal{C}_{\beta}$ for all 
		$\beta$ which are limit points of $C$. 
\end{enumerate}
We say that $\square_{\kappa, \lambda}$ holds if such a sequence exists. 
\end{defn}
In this section we will be concerned only with $\square_{\omega_1, 2}$. 
In order to obtain our result, we will need to make use of a large cardinal 
hypothesis: 

\begin{defn}
	A cardinal $\kappa$ is said to be \emph{$\omega_1$-Erd\H{o}s} if it is least such that for 
	any partition $f \colon \brac{\kappa}^{< \omega} \rightarrow 2$, there is 
	$H \in \brac{\kappa}^{\omega_1}$ which is homogeneous for $f$. 
\end{defn}

\begin{lem}[Silver]
	If $\kappa$ is $\omega_1$-Erd\H{o}s, then for any structure $\mathcal{M}$ with 
	$\kappa \subseteq \mathcal{M}$, there is a set of indiscernibles $I \in \brac{\kappa}^{\omega_1}$
	for $\mathcal{M}$. Morever, if $\mathcal{M}$ has underlying set $H\!\paren{\kappa}$ and 
	includes among its predicates some 
	$\vartriangleleft$ which is a well-ordering of its universe, 
	we may assume $I$ consists of inaccessible cardinals which are \emph{remarkable} in the 
	sense that for any $\gamma \in I$, $I\setminus \gamma$ is a set of 
	indiscernibles for $\langle\mathcal{M}, \paren{\delta}_{\delta < \gamma}\rangle$. 
\end{lem}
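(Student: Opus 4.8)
The plan is to obtain the bare indiscernibles by a single application of the $\omega_1$-Erd\H os property to a coloring that codes all first-order formulas, and then, when a well-ordering of the universe is available, to upgrade to inaccessible remarkable indiscernibles by a second application of the partition property to a coloring that additionally records where Skolem terms send tuples of indiscernibles. For the first assertion no Skolem functions are needed. Fix an enumeration $\langle\varphi_n:n<\omega\rangle$ of all formulas of the (countable) language of $\mathcal M$, arranged so that the free variables of $\varphi_n$ lie among $v_0,\dots,v_{n-1}$ and so that every formula occurs with some index at least its number of free variables. Since $\kappa\subseteq\mathcal M$, define $f\colon[\kappa]^{<\omega}\to 2$ by $f(\{\alpha_0<\dots<\alpha_{n-1}\})=1$ iff $\mathcal M\models\varphi_n[\alpha_0,\dots,\alpha_{n-1}]$, and let $I\in[\kappa]^{\omega_1}$ be homogeneous for $f$. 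Given a formula $\psi$, choose $n$ with $\varphi_n=\psi$; since $I$ is infinite every increasing tuple from $I$ of length $\le n$ extends to one of length exactly $n$, so homogeneity of $f$ on $[I]^n$ forces $\psi$ to have constant truth value on increasing tuples from $I$. Hence $I$ is a set of indiscernibles for $\mathcal M$, and after thinning we may take $\otp(I)=\omega_1$.

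For the second assertion, let $\mathcal M=\langle H(\kappa),\in,\vartriangleleft,\dots\rangle$ with $\vartriangleleft$ a well-ordering of $H(\kappa)$ among its predicates; then $\mathcal M$ has definable Skolem functions and $\operatorname{Hull}^{\mathcal M}(X)\preceq\mathcal M$ for every $X$. An $\omega_1$-Erd\H os cardinal is a limit of inaccessibles (a standard consequence of $\kappa\to(\omega)^{<\omega}_2$), so the set $S$ of inaccessibles below $\kappa$ is unbounded in $\kappa$ and hence has order type $\kappa$; composing any coloring of $[\kappa]^{<\omega}$ with the increasing enumeration of $S$ shows that the partition property of $\kappa$ transfers to $S$, so it suffices to find a homogeneous set inside $S$, whose elements will then automatically be inaccessible. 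Fix an enumeration $\langle t_m:m<\omega\rangle$ of the Skolem terms. Given an increasing tuple $s=\{\alpha_0<\dots<\alpha_{2n-1}\}$ of even length, split it into its lower half $\{\alpha_0,\dots,\alpha_{n-1}\}$ and its upper half $\{\alpha_n,\dots,\alpha_{2n-1}\}$. The color of $s$ records: (i) the data of the coloring of the first assertion, so homogeneity still delivers ordinary indiscernibility in the full language of $\mathcal M$; and (ii) for each $m<n$ and each way of applying $t_m$ to finitely many arguments from the upper half (possibly preceded by finitely many from the lower half), whether the resulting value is an ordinal and, if so, which of the finitely many cuts determined by $\alpha_0<\dots<\alpha_{n-1}$ it falls into. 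This is finitely much information, so after the standard reduction to two colors I apply the $S$-localized partition property of $\kappa$ to obtain $I\subseteq S$ in $[\kappa]^{\omega_1}$ homogeneous, and after deleting an initial segment I may assume every $\gamma\in I$ has infinitely many members of $I$ below it.

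The claim is that this $I$ works. Ordinary indiscernibility is clause (i). For remarkability, fix $\gamma\in I$, ordinals $\xi_1<\dots<\xi_p<\gamma$, and increasing tuples $\vec\eta,\vec\eta'$ from $I$ lying above $\gamma$. Unwound using that $\gamma$ has infinitely many members of $I$ below it, clause (ii) yields that for each Skolem term $t$ the value $t^{\mathcal M}(\vec\xi{}^\frown\vec\eta)$, as $\vec\eta$ ranges over increasing tuples from $I$ above $\gamma$, stays on one side of $\gamma$; and in the case where it stays below $\gamma$, one shows it does not depend on $\vec\eta$ by combining the monotonicity of $\vec\eta\mapsto t^{\mathcal M}(\vec\xi{}^\frown\vec\eta)$ forced by indiscernibility, the well-foundedness of the ordinals (which rules out the strictly decreasing alternative), and the regularity of the inaccessible $\gamma$ (which rules out an image cofinal in $\gamma$). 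Granting this, the map fixing every ordinal $<\gamma$, sending $\eta_i\mapsto\eta_i'$, and sending $t^{\mathcal M}(\vec\xi{}^\frown\vec\eta)\mapsto t^{\mathcal M}(\vec\xi{}^\frown\vec\eta')$ for each Skolem term $t$, is a well-defined isomorphism between the elementary submodels $\operatorname{Hull}^{\mathcal M}(\gamma\cup\operatorname{ran}\vec\eta)$ and $\operatorname{Hull}^{\mathcal M}(\gamma\cup\operatorname{ran}\vec\eta')$ of $\mathcal M$, hence elementary; in particular $\mathcal M\models\psi[\vec\eta,\vec\xi]$ iff $\mathcal M\models\psi[\vec\eta',\vec\xi]$ for every $\psi$, so $I\setminus\gamma$ is a set of indiscernibles for $\langle\mathcal M,(\delta)_{\delta<\gamma}\rangle$.

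The genuinely routine steps are the coding of formulas into a single coloring and the localization to $S$. The step I expect to be the main obstacle is making the previous paragraph precise: pinning down the exact form of clause (ii) and showing that homogeneity for it, together with the regularity of the chosen indiscernibles, forces the below-$\gamma$ values of Skolem terms on tuples from $I\setminus\gamma$ to be independent of the tuple — equivalently, that the ``remarkable'' axioms are consistent with the partition relation. This is the classical argument of Silver, and for the details I would follow the treatment in \cite{donder_levinski}.
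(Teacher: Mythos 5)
First, a point of reference: the paper does not actually prove this lemma---its ``proof'' is a citation to \cite{higher_infinite} and \cite{some_apps_of_core_model}---so your decision to defer the hardest step to the classical literature is not in itself a mismatch with the paper. Your first paragraph (plain indiscernibles of order type $\omega_1$ from a single $2$-coloring coding formulas by tuple length) is correct and standard.

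The genuine gap is at the remarkability step, precisely where you flag it, and it is worth naming why your clause (ii) cannot close it. The coloring in (ii) constrains Skolem terms evaluated only at tuples drawn from the prospective homogeneous set, so homogeneity tells you about $t^{\mathcal M}(\vec\xi{}^\frown\vec\eta)$ when $\vec\xi\subseteq I\cap\gamma$; remarkability, however, quantifies over \emph{arbitrary} ordinals $\vec\xi<\gamma$, and these are in general not in the Skolem hull of $I$ (that hull has size $\aleph_1$, while $\gamma$ is inaccessible), so no amount of homogeneity for colorings of tuples from $\kappa$ speaks about them directly. Your hull isomorphism does not repair this: for the map $t^{\mathcal M}(\vec\delta,\vec\eta)\mapsto t^{\mathcal M}(\vec\delta,\vec\eta')$ (with $\vec\delta<\gamma$ arbitrary) to be well defined, and for it to fix every ordinal $<\gamma$, you need exactly statements of the form ``$\mathcal M\models\psi[\vec\delta,\vec\eta]$ iff $\mathcal M\models\psi[\vec\delta,\vec\eta']$'' for arbitrary parameters $\vec\delta<\gamma$ (applied to equality of terms), i.e.\ instances of the very indiscernibility over $\paren{\delta}_{\delta<\gamma}$ being proved; as written the argument is circular at its crucial point. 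The classical repair, which is the missing idea, uses the well-ordering $\vartriangleleft$: for each formula $\varphi$ the $\vartriangleleft$-least tuple of parameters below the first coordinate witnessing a disagreement between two upper tuples is a \emph{definable Skolem term} of $\mathcal M$; the Silver-style stabilization (essentially your clause (ii) together with well-foundedness of the ordinals) applies to that term and shows its value below $\gamma$ is independent of the upper indiscernibles, and then re-representing that value via fresh indiscernibles above everything and invoking ordinary indiscernibility contradicts its defining (distinguishing) property. Without this reduction of arbitrary parameters below $\gamma$ to term values at indiscernibles, the proof does not go through. A lesser issue: your route to inaccessibility localizes the partition property to the set of inaccessibles below $\kappa$, which presupposes that this set is unbounded in $\kappa$---true, but a nontrivial fact usually established by machinery at the same depth as this lemma; the treatments in \cite{some_apps_of_core_model} and \cite{donder_levinski} instead extract inaccessibility of the indiscernibles from remarkability/goodness itself.
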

\begin{proof}
	See \cite{higher_infinite}, \cite{some_apps_of_core_model}. 
\end{proof}

\subsection{The Poset} \label{the_poset}
Our poset $\mathbb{P}$ is a ``Silverized'' version of the one appearing in 
\cite{indexed_squares} in the sense that we modify their poset 
to allow conditions with $\omega_1$-sized support. 
We define $\mathbb{P} = \mathbb{P}_\kappa$ as follows: set $p \in 
\mathbb{P}$ iff $p$ is a function so that

\begin{enumerate}[(1)]
	\item The domain of $p$ is a closed $\leq \omega_1$-sized 
		set of limit ordinals less than $\kappa$. 
	\item If $\cf{\alpha} = \omega$ and $\alpha \in \dom{p}$
		then $1 \leq \abs{p\paren{\alpha}} \leq 2$
		and each set in $p\!\paren{\alpha}$ is a club 
		subset of $\alpha$ with countable order type. 
	\item If $\cf{\alpha} = \omega_1$ and $\alpha \in 
		\dom{p}$ then $p\!\paren{\alpha} = 
		\left\{ C \right\}$ where $C$ is a club 
		subset of $\alpha$ with order type 
		$\omega_1$. 
	\item If $\cf{\alpha} \geq \omega_2$ then 
		$p\!\paren{\alpha} = \curly{C}$ where 
		$C$ is a closed bounded subset of 
		$\alpha$ with countable order type 
		such that $\max{C} =
		\sup{\paren{ \dom{p} \cap \alpha}}$. 
	\item If $\alpha \in \dom{p}$, $C \in p\!\paren{\alpha}$ 
		and $\beta \in \lim{(C)}$, then 
		$\beta \in \dom{p}$ and 
		$C \cap \beta \in p\!\paren{\beta}$. 
	\item The supremum of $\otp{C}$ taken over 
		all $C \in p\!\paren{\alpha}$, 
		$\cf{\alpha} \geq \omega_2$, is 
		strictly below $\omega_1$. 
\end{enumerate}

For two elements $p, q \in \mathbb{P}_\kappa$, we set 
$p \leq q$ iff: 
\begin{enumerate}
	\item $\dom{q} \subseteq \dom{p}$
	\item For all $\alpha \in \dom{q}$: 
		\begin{enumerate}[(a)]
			\item If $\cf{\alpha} \in \curly{\omega, \omega_1}$, 
				then $p\!\paren{\alpha} = q\!\paren{\alpha}$. 
			\item If $\cf{\alpha} \geq \omega_2$, 
				$p\!\paren{\alpha} = \curly{C}$ and 
				$q\!\paren{\alpha} = \curly{D}$, then 
				$C$ is an end-extension of $D$ in the 
				sense that $D = C \cap \paren{\max{(D)} + 1}$. 
		\end{enumerate}
\end{enumerate}

\begin{lem}
	Suppose that $\kappa$ is inaccessible. Then $\mathbb{P} = \mathbb{P}_\kappa$ is 
	$\kappa$-c.c. and countably closed, and collapses 
	$\kappa$ to $\aleph_2$ while adding a $\square_{\omega_1, 2}$-sequence. 
	\label{cc_and_closure_lem}
\end{lem}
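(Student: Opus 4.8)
The plan is to verify the four claimed properties in turn, handling countable closure first since it is needed for the other arguments. For countable closure, given a descending $\omega$-sequence $\langle p_n : n < \omega \rangle$, I would let $q_0 = \bigcup_n p_n$ and observe that $\dom{q_0}$ is a $\leq \omega_1$-sized set of limit ordinals whose only possible failure of closure is at $\delta := \sup_n \sup(\dom{p_n})$; I then define $q$ by adding $\delta$ to the domain if necessary and setting, for each $\alpha \in \dom{q_0}$ with $\cf\alpha \geq \omega_2$ and $p_n(\alpha) = \{C_n^\alpha\}$, the value $q(\alpha) = \{\bigcup_n C_n^\alpha\}$ (closing off at its supremum, which lies below $\alpha$ since $\cf\alpha \geq \omega_2 > \omega$), and similarly choosing a cofinal $\omega$-sequence through $\sup(\dom{q_0} \cap \delta)$ to serve as a new thread at $\delta$ when $\delta \in \dom q$; coherence (clause (5)) is preserved because each limit point of the new unions is already handled at an earlier stage. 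The key point making this work is clause (6): the order types of the $\cf \geq \omega_2$ threads stay uniformly bounded below $\omega_1$, so countably many unions cannot push an order type up to $\omega_1$, keeping each new club of countable order type.

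For the $\kappa$-c.c., I would argue that an antichain of size $\kappa$ leads to a contradiction via a $\Delta$-system argument: since $\kappa$ is inaccessible, by shrinking to a set of size $\kappa$ I may assume the domains $\dom{p_\xi}$ form a $\Delta$-system with root $r$, that $|\dom{p_\xi}|$ is constant, that $\sup r < \min(\dom{p_\xi} \setminus r)$ for all $\xi$ (using regularity of $\kappa$ and that each domain is bounded below $\kappa$), and that $p_\xi \restriction r$ is constant (only $2^{\omega_1} < \kappa$ many possibilities, using inaccessibility). Two such conditions $p_\xi, p_\eta$ are then compatible: their union is a function, its domain is closed because the tails sit in disjoint intervals above the common root, and the coherence clauses (5) and (6) are inherited since no new limit points are created by the union. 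The one subtlety is condition (4), the requirement $\max C = \sup(\dom{p} \cap \alpha)$ for $\cf\alpha \geq \omega_2$: I need the $\Delta$-system refinement to also fix, for each $\alpha \in r$ with $\cf \alpha \geq \omega_2$, the ordinal $\sup(\dom{p_\xi} \cap \alpha)$, which is automatic once $p_\xi \restriction r$ is fixed and the tails lie above $\sup r$ (so they contribute nothing below any $\alpha \in r$ with the tail sitting above $\alpha$, and for $\alpha$ above the tail the extension clause 2(b) lets us take a common end-extension).

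For the collapse, a density argument shows that for each $\beta < \kappa$ of cofinality $\geq \omega_2$ in $V$, the set of conditions $p$ with $\beta \in \dom p$ is dense (just close off an arbitrary countable cofinal-in-$\sup(\dom p \cap \beta)$ set and extend coherently downward, which is possible by inaccessibility of $\kappa$ bounding the needed closure); since $p(\beta)$ is then forced to be a countable cofinal subset of $\beta$, every such $\beta$ has cofinality $\omega$ in the extension, and together with $\omega_1$-closure this collapses everything below $\kappa$ of cofinality $\geq \omega_1$ in $V$ down to $\omega$ or $\omega_1$ — in particular $\kappa$ becomes $\omega_2$. Finally, the generic union $\vec{\mathcal C} = \langle \mathcal C_\alpha : \alpha < \kappa \rangle$ with $\mathcal C_\alpha = \bigcup \{ p(\alpha) : p \in G, \alpha \in \dom p\}$ is a $\square_{\omega_1, 2}$-sequence: clauses (2) and (3) give $1 \leq |\mathcal C_\alpha| \leq 2$ and that members are clubs of order type $\leq \omega_1$, while clause (5) is exactly the coherence requirement. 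I expect the main obstacle to be the c.c.\ argument, specifically checking that the union of two $\Delta$-system conditions genuinely satisfies clauses (4) and (5) — that is, that taking unions introduces no new limit points and respects the $\max C = \sup(\dom p \cap \alpha)$ bookkeeping — rather than any of the more routine density or closure verifications.
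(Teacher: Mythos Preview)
Your countable closure argument has a genuine gap. You claim that $\dom q_0 = \bigcup_n \dom p_n$ can fail to be closed only at $\delta = \sup_n \sup(\dom p_n)$, but this is false: \emph{internal} limit points can appear. Concretely, take $\alpha \in \dom p_0$ with $\cf\alpha \geq \omega_2$ and suppose the clubs $p_n(\alpha) = \{C_n^\alpha\}$ strictly increase by end-extension. By clause~(4), $\max C_n^\alpha = \max(\dom p_n \cap \alpha)$, so the ordinals $\max(\dom p_n \cap \alpha)$ strictly increase; their supremum $\sigma_\alpha$ is a limit point of $\bigcup_n \dom p_n$ lying strictly below $\alpha$, hence strictly below $\delta$, and one checks easily that $\sigma_\alpha \notin \bigcup_n \dom p_n$. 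Your lower bound $q$ does not put $\sigma_\alpha$ into its domain. Worse, you \emph{do} put $\sigma_\alpha$ into the club assigned at $\alpha$ (this is your ``closing off at its supremum''), which makes $\sigma_\alpha$ a limit point of the unique member of $q(\alpha)$; clause~(5) then demands $\sigma_\alpha \in \dom q$ with $\bigcup_n C_n^\alpha \in q(\sigma_\alpha)$, neither of which you have arranged. The sentence ``each limit point of the new unions is already handled at an earlier stage'' is exactly what fails.

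The paper fixes this by taking $\dom p_\omega$ to be the full topological closure of $\bigcup_n \dom p_n$ and then defining $p_\omega$ carefully at every new point: at the points $\sigma_\alpha$ just described (the set $Y$ in the paper's notation) one sets $p_\omega(\sigma_\alpha) = \{\bigcup_n C_n^\alpha\}$, and at any other new limit point $\beta$ one uses the cofinal $\omega$-sequence $\{\max(\dom p_n \cap \beta) : n < \omega\}$, which has no limit points below $\beta$ and so creates no new coherence obligations. The remainder of your plan---the $\Delta$-system argument for the $\kappa$-c.c., the density argument for the collapse, and reading off the square sequence from the generic---is along the right lines and matches the paper's (terser) treatment; one small correction is that for $\alpha$ of $V$-cofinality $\geq \omega_2$ no single $p(\alpha)$ is cofinal in $\alpha$ (clause~(4) makes it bounded), and it is the union $\bigcup_{p \in G} p(\alpha)$ that furnishes the generic club witnessing $\cf^{V[G]}(\alpha) \leq \omega_1$.
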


\begin{proof}
	The proof is very similar to that of the corresponding result in 
	$\cite{indexed_squares}$. 
	The fact that $\mathbb{P}$ is $\kappa$-c.c. follows from a standard $\Delta$-system 
	argument. If we can show that $\mathbb{P}$ is countably closed, then the second 
	conclusion follows immediately. So suppose that $\angles{p_n \colon n < \omega}$
	is a decreasing sequence of conditions. 

	Let $X$ be the set of $\alpha \in \bigcup_{n < \omega}{\dom{p_n}}$ such that 
	the value of $p_n\!\paren{\alpha}$ does not eventually stabilize and let 
	\begin{align*}
		Y = \{\sup_{n < \omega}{ \max{p_n\!\paren{\alpha}}} \colon 
	\alpha \in X \}
	\end{align*}
	Observe that $Y \cap \paren{\bigcup_{n < \omega}{\dom{p_n}}} = \emptyset$, since 
	if $\alpha \in X$ the fact that $\max{(p_n\!\paren{\alpha})} \geq \sup{(\dom{p_n} \cap \alpha)}$
	for every $n$ gives 
	\begin{align*}	
	\sup_{n < \omega}{ \max{p_n\!\paren{\alpha}}} \notin 
	\bigcup_{n < \omega}{\dom{p_n}}
\end{align*}
	Let 
	\begin{align*}
		Z = \overline{ \paren{\bigcup_{n < \omega}{\dom{p_n}}}} \cup Y
	\end{align*}
	where the overline indicates closure in the ordinal topology. 
	We claim that $Z$ is closed. To show this it suffices to show that 
	any limit point of $Y$ lies in $\overline{\bigcup_{n < \omega}{\dom{p_n}}}$. 
	Moreover, this will itself follow from the assertion that any element of 
	$Y$ lies in $\overline{\bigcup_{n < \omega}{\dom{p_n}}}$. But this is immediate 
	by condition (4) in the definition of $\mathbb{P}$. 

	We will define a condition $p_{\omega}$	with domain 
	$Z$ which is a lower bound for $\langle p_n \colon n < \omega \rangle$. 
	First, if $\alpha \in \bigcup_{n < \omega}{\dom{p_n}} \setminus X$, 
	let $p_\omega\!\paren{\alpha}$ be the eventual value of the 
	sequence $\langle p_n\!\paren{\alpha} \colon n < \omega \rangle$. 
	If $\alpha \in X$, then set 
	\begin{align*}
		p_{\omega}\!\paren{\alpha} = \bigcup_{n < \omega}{p_n\!\paren{\alpha}} \cup 
		\{ \sup_{n < \omega}{ \max{p_n\!\paren{\alpha}}} \}
	\end{align*}
	Next, if $\alpha \in Y$ then $\alpha = \sup_{n < \omega}{\max{p_n\!\paren{\beta}}}$
	for a unique $\beta \in X$, and we set 
	\begin{align*}
		p_{\omega}\!\paren{\alpha} = \bigcup_{n < \omega}{p_n\!\paren{\beta}}
		\cup \{ \sup_{n < \omega}{\max{p_n\!\paren{\beta}}}\}
	\end{align*}
	for this $\beta$. Finally, suppose $\alpha \in 
	\overline{\paren{\bigcup_{n < \omega}{\dom{p_n}}}} \setminus \paren{\bigcup_{n < \omega}{\dom{p_n}}}$ 
	and $p_\omega(\alpha)$ is yet to be defined. Set
	\begin{align*}
		p_{\omega}(\alpha) = 
		\left\{ \max{\paren{ \dom{(p_n)} \cap \alpha}} \colon n < \omega \right\}
	\end{align*}
	Clearly this set is unbounded in $\alpha$. Moreover, this set has order-type $\omega$, and therefore 
	has no limit points below $\alpha$ (and is club in $\alpha$). Therefore we are in no danger of 
	violating coherence (condition (5) in the definition of $\mathbb{P}$) by defining 
	$p_\omega(\alpha)$ as such.

	We refer to the condition $p_\omega$ defined above as the \emph{canonical lower bound} of 
	$\angles{p_n \colon n < \omega}$. 
\end{proof}

We also define a \emph{threading poset} for a given $\square_{\omega_1, 2}$-sequence. 
Supposing that $\vec{\mathcal{C}} = \angles{C_\alpha \colon \alpha < \omega_2}$ is 
such a sequence, we let $\mathbb{T} = \mathbb{T}_{\vec{\mathcal{C}}}$ be the 
poset of closed bounded subsets $C$ of $\omega_2$ of countable order type  
such that $C$ threads $\angles{\mathcal{C}_\alpha \colon \alpha \leq \max{C}}$ 
in the sense that $C \cap \alpha \in \mathcal{C}_{\alpha}$ for all $\alpha$ which 
are limit points of $C$. 

If $C, D \in \mathbb{T}$, then we set $C \leq D$ if and only if $C$ is 
an end-extension of $D$. 

Finally, suppose that $\mu < \kappa$ are two inaccessible cardinals. 
If $G$ is the generic added by $\mathbb{P}_{\mu}$, then 
$\Q = \Q_{\mu, \kappa, G}$ is the poset in $V\!\brac{G}$ defined by 
setting $q \in \Q$ iff $q \in V$ and: 

\begin{enumerate}[(a)]
	\item $\dom{q}$ is a closed $\leq \omega_1$-sized set of limit ordinals 
		in the interval $\paren{\mu, \kappa}$. 
	\item If $\cf{\alpha} = \omega$ and $\alpha \in \dom{q}$, then 
		$1 \leq \abs{q\!\paren{\alpha}} \leq 2$ and 
		each element of $q\!\paren{\alpha}$ is a club 
		with countable order type. 
	\item If $\cf{\alpha} = \omega_1$ and $\alpha \in \dom{q}$ then 
		$q\!\paren{\alpha} = \curly{C}$ where $C$ is a club 
		subset of $\alpha$ with order type $\omega_1$. 
	\item If $\cf{\alpha} \geq \omega_2$, then $q\!\paren{\alpha} = \curly{C}$
		where $C$ is a closed bounded subset of $\alpha$ with 
		countable order type such that $\max{C} =
		\sup{(\dom{q} \cap \alpha)}$. 
	\item If $\alpha \in \dom{q}$, $C \in q\!\paren{\alpha}$, and 
		$\beta \in \lim{C}$, then:
		\begin{enumerate}[(A)]
	\item If $\beta > \mu$, then 
		$\beta \in \dom{q}$ and 
		$C \cap \beta \in q\!\paren{\beta}$. 
	\item If $\beta < \mu$, then $C \cap \beta \in \mathcal{C}_{\beta}$, 
		where $\angles{\mathcal{C}_\beta \colon \beta < \mu}$
		is $\bigcup{G}$. 
\end{enumerate}
	\item The supremum of $\otp{C}$ taken over all $C \in q\!\paren{\alpha}$, 
		$\cf{\alpha} \geq \omega_2$, is strictly below $\omega_1$. 
\end{enumerate}

For two elements $p, q \in \mathbb{Q}_{\mu, \kappa}$, we set 
$p \leq q$ iff: 
\begin{enumerate}[(1)]
	\item $\dom{q} \subseteq \dom{p}$
	\item For all $\alpha \in \dom{q}$: 
		\begin{enumerate}[(a)]
			\item If $\cf{\alpha} \in \curly{\omega, \omega_1}$, then 
				$p\!\paren{\alpha} = q\!\paren{\alpha}$. 
			\item If $\cf{\alpha} \geq \omega_2$, 
				$p\!\paren{\alpha} = \curly{C}$, $q\!\paren{\alpha} = \curly{D}$, 
				then $C$ is an end-extension of $D$. 
		\end{enumerate}
\end{enumerate}

\begin{claim}
	Suppose that $\mu$, $\kappa$ are inaccessible cardinals with 
	$\mu < \kappa$, and $\dot{G}$ is the canonical name for the 
	$\mathbb{P}_{\mu}$-generic. Then if we let $\dot{\mathbb{T}} = \check{\mathbb{T}}_{ \bigcup{\dot{G}}}$, 
	$\dot{\Q} = \check{\Q}_{\mu, \kappa, \dot{G}}$, there is an isomorphism between 
	a dense subset of $\mathbb{P}_{\kappa}$ and a dense subset of 
	$\mathbb{P}_{\mu} \ast \dot{\mathbb{T}} \ast \dot{\Q}$. In particular these two 
	forcings are equivalent, so informally we may view them as being equal. 
\end{claim}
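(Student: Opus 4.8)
The plan is to realize the isomorphism concretely as ``split the condition at $\mu$,'' after passing on both sides to dense subsets on which the threading data is \emph{literal} rather than merely forced. On the $\mathbb{P}_\kappa$ side, let $D$ consist of those $p\in\mathbb{P}_\kappa$ with $\mu\in\dom p$ and $\mu+\omega\in\dom p$. To see $D$ is dense, fix $p'$; since $\abs{\dom p'}\le\omega_1<\mu=\cf\mu$ the set $\dom p'\cap\mu$ is bounded in $\mu$, hence (being closed, by clause (1)) empty or with a maximum $\delta_0<\mu$, so one may first adjoin $\mu$ with value $\curly{\curly{\delta_0}}$ (or $\curly{\curly{0}}$ if $\dom p'\cap\mu=\emptyset$): $\curly{\delta_0}$ is a closed bounded subset of $\mu$ of countable order type with $\max\curly{\delta_0}=\sup(\dom p\cap\mu)$, it has no limit points so clause (5) at $\mu$ is vacuous, and no club occurring in $p'$ has $\mu$ as a limit point (clubs at $\alpha>\mu$ have order type $\le\omega_1<\cf\mu$, so meet $\mu$ in a bounded set). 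One then adjoins $\mu+\omega$ with the club $\curly{\mu+n:n<\omega}$ and end-extends by the single point $\mu+\omega$ the club of the least $\omega_2$-cofinal element of $\dom p'$ above $\mu$, if there is one; the point of the clause ``$\mu+\omega\in\dom p$'' is that clause (d) of $\Q$, which only sees $\dom q\subseteq(\mu,\kappa)$, must agree at the least element of $\dom q$ with clause (4) of $\mathbb{P}_\kappa$, which also counts $\mu$.

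For $p\in D$ set $\Phi(p)=\angles{\,p\restriction\mu,\ \check t_p,\ p\restriction(\mu,\kappa)\,}$, with $t_p$ the unique member of $p(\mu)$. I would verify $\Phi(p)\in\mathbb{P}_\mu\ast\dot{\mathbb{T}}\ast\dot\Q$ factor by factor. That $p\restriction\mu\in\mathbb{P}_\mu$ is immediate. For $p\restriction\mu\forces\check t_p\in\dot{\mathbb{T}}$: any limit point $\beta$ of $t_p$ has $\cf\beta=\omega$ (as $\otp{t_p}$ is countable) and, by clause (5), lies in $\dom p\cap\mu$ with $t_p\cap\beta\in(p\restriction\mu)(\beta)$; since a condition's value at an $\omega$-cofinal point of its domain is frozen by the ordering, $p\restriction\mu$ forces $(\bigcup\dot{G})(\beta)=(p\restriction\mu)(\beta)\ni t_p\cap\beta$, while $\max t_p=\sup(\dom p\cap\mu)<\mu$ by clause (4). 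For $p\restriction\mu\forces p\restriction(\mu,\kappa)\in\dot\Q$: clauses (a)--(d),(f) of $\Q$ restate clauses (1)--(4),(6) of $\mathbb{P}$ (invoking the $\mu+\omega$ reconciliation for (d) at the bottom of $\dom q$), clause (e)(A) is the $\beta>\mu$ case of clause (5), and clause (e)(B) holds since a limit point $\beta<\mu$ of a club $C$ in $p\restriction(\mu,\kappa)$ lies in $\dom p\cap\mu$ with $C\cap\beta\in(p\restriction\mu)(\beta)$, which $p\restriction\mu$ forces into $(\bigcup\dot{G})(\beta)$; and no such $C$ has $\mu$ itself as a limit point, by the same order-type bound.

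Comparing the four orderings --- each of the form ``domains shrink; values are preserved at points of cofinality $\omega$ and $\omega_1$; values end-extend at points of cofinality $\ge\omega_2$'' --- shows $\Phi$ preserves and reflects order, so it remains to check that the range $E$ of $\Phi$ is dense and that $\Phi$ is inverted by the obvious concatenation map on $E$. For density, from an arbitrary condition pass first --- by routine iteration density arguments, using that $\dot{\mathbb{T}}$ and $\dot\Q$ name posets whose conditions are ground model sets --- to one of the form $\angles{p_0,\check t,\check q}$ with $p_0\forces\check t\in\dot{\mathbb{T}}$ and $p_0\forces\check q\in\dot\Q$. Then extend $p_0$ to make the threading literal: if some $\beta\in\lim t$ (necessarily $\cf\beta=\omega$) were outside $\dom p_0$, one could add $\beta$ to $\dom p_0$ with value a single $\omega$-sequence cofinal in $\beta$ disjoint from $t\cap\beta$, so $p_0$ would fail to force $t\cap\beta\in(\bigcup\dot{G})(\beta)$ --- contradiction; hence already $\lim t\subseteq\dom p_0$ with $t\cap\beta\in p_0(\beta)$, and likewise for the limit points below $\mu$ of clubs in $q$. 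A final extension adjoining $\mu+\omega$-type points and end-extending $t$ arranges $\max t=\sup\dom p_0$ together with the bottom-of-$\dom q$ reconciliation, landing the condition in $E$. Then $p:=p_0\cup\curly{\angles{\mu,\curly{t}}}\cup q$ is a $\mathbb{P}_\kappa$-condition in $D$ with $\Phi(p)=\angles{p_0,\check t,\check q}$; the only nontrivial point is that $\dom p$ is closed, and $\mu$ is not a limit point of it because $\dom p_0$ is bounded below $\mu$ and $\dom q\subseteq(\mu,\kappa)$.

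The main obstacle is exactly that passage from ``$p_0$ forces $t$ (resp.\ $q$) to thread the generic sequence'' to ``$t$ (resp.\ $q$) is a literal restriction of a single $\mathbb{P}_\kappa$-condition.'' It rests on the local combinatorics of the generic $\square_{\omega_1,2}$-sequence below $\mu$ --- that the club(s) at an $\omega$-cofinal ordinal are decided precisely when that ordinal is placed in a condition's domain, and are otherwise free --- which is what drags $\lim t$ (and the limit points coming from $q$) into $\dom p_0$ with the correct values the moment $\dot t$ (resp.\ $\dot q$) is decided. Granting that, the density claims, the closedness of the various domains, the clause-by-clause translation between $\mathbb{P}$ and $\Q$, and the agreement of the four orderings are all routine.
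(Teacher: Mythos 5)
The paper itself gives no details here (it simply cites Cummings--Schimmerling), and your split-at-$\mu$ factorization is exactly the argument that citation points to: restrict to the dense set of $p$ with $\mu\in\dom p$, read off $p\restriction\mu$, the thread $t_p\in p(\mu)$, and the upper part, and use the fact that below $\mu$ the generic square sequence is \emph{frozen} at points of cofinality $\omega$ and $\omega_1$ once they enter a condition's domain. Your $\mu+\omega$ device to reconcile clause (4) of $\mathbb{P}_\kappa$ with clause (d) of $\Q$ at the bottom of $\dom q$ is correct and is a point that genuinely needs attention, and your reduction to check-names via countable closure (for $\dot{\mathbb{T}}$) and the requirement $q\in V$ (for $\dot\Q$) is fine.

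There is, however, one case your ``literalization'' step does not cover as written. For $t$ the parenthetical ``necessarily $\cf\beta=\omega$'' is correct, but the clause ``likewise for the limit points below $\mu$ of clubs in $q$'' is not: a club $C\in q(\alpha)$ at a point $\alpha>\mu$ with $\cf\alpha=\omega_1$ has order type $\omega_1$, so its limit points $\beta<\mu$ may have cofinality $\omega_1$. At such $\beta$ your avoidance move (``add $\beta$ with a single $\omega$-sequence disjoint from $C\cap\beta$'') is unavailable: clause (3) forces the value at $\beta$ to be a single club of order type $\omega_1$, and clause (5) then forces you to adjoin \emph{all} of its limit points with matching restrictions, so a one-point enlargement of $\dom p_0$ does not suffice. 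The repair is standard but should be said: since $\dom p_0$ is closed and $\beta\notin\dom p_0$, the set $\dom p_0\cap\beta$ is bounded by some $\delta<\beta$; pick a club $D\subseteq(\delta,\beta)$ of order type $\omega_1$ with $D\neq C\cap\beta$, adjoin $\curly{\beta}\cup(\lim(D)\cap\beta)$ to the domain with the restrictions of $D$ as values (no old club has any of these ordinals as a limit point, else clause (5) for $p_0$ would already have put them in $\dom p_0$), and end-extend the leading clubs at those $\geq\omega_2$-cofinal members of $\dom p_0$ above $\beta$ whose clause-(4) maximum lies below $\beta$, so that clauses (4) and (6) survive; the freezing argument at $\omega_1$-cofinal points (the ordering demands equality there too) then yields the same contradiction. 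This clause-(4) maintenance is in fact needed every time you enlarge a domain, including in your density argument for $D$, where the end-extension by $\mu+\omega$ should be performed at the least $\geq\omega_2$-cofinal point of $\dom p'$ above $\mu$ only when $\dom p'\cap(\mu,\alpha)=\emptyset$ (and is neither possible nor needed otherwise). With these repairs your proof is complete and coincides with the intended one.
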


\begin{proof}
	As in \cite{indexed_squares}. 
\end{proof}

\subsection{The Proof} \label{the_proof}

\begin{theorem}
	Suppose that $\kappa$ is an $\omega_1$-Erd\H{o}s cardinal. Let 
	$\mathbb{P} = \mathbb{P}_\kappa$. 
	Then for any $\mathbb{P}$-generic $G$, 
	$V\!\brac{G}$ satisfies Chang's Conjecture. 
	\label{consistency_theorem}
\end{theorem}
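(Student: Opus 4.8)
The plan is to verify in $V[G]$ the condition from Claim~\ref{equivalent_formulation_claim}: that every structure $\mathcal{H}$ with underlying set $H\!\paren{\omega_2}^{V[G]}$ has an elementary substructure of size $\aleph_1$ meeting $\omega_1$ in a countable set. Since $\mathbb{P}$ is countably closed we have $\omega_1^{V[G]} = \omega_1^{V}$; by Lemma~\ref{cc_and_closure_lem}, $\omega_2^{V[G]} = \kappa$, $H\!\paren{\omega_2}^{V[G]} = H\!\paren{\kappa}^{V[G]}$, and every element of $H\!\paren{\kappa}^{V[G]}$ is realized by a nice $\mathbb{P}$-name of hereditary size $< \kappa$. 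Working in $V$, I fix a $\mathbb{P}$-name $\dot{\mathcal{H}}$ for such a structure, a well-ordering $\vartriangleleft$ of $H\!\paren{\kappa}$, and a family $\mathcal{N} \subseteq H\!\paren{\kappa}$ of nice names realizing all of $H\!\paren{\kappa}^{V[G]}$, and let $\mathfrak{A} = \langle H\!\paren{\kappa}, \in, \vartriangleleft, \mathbb{P}, \dot{\mathcal{H}}, \mathcal{N}, \dots\rangle$ have predicates for all of these. Since $\kappa$ is $\omega_1$-Erd\H{o}s and $\mathfrak{A}$ has underlying set $H\!\paren{\kappa}$ with a well-ordering among its predicates, Silver's Lemma provides a set $I = \curly{\gamma_\xi \colon \xi < \omega_1}$ of remarkable indiscernibles for $\mathfrak{A}$, each inaccessible in $V$. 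Note that the rank of the value of any $\vartriangleleft$-Skolem term at a tuple from $I$ is again such a value, and there are at most $\aleph_1$ such values below $\kappa$, so --- as $\cf\kappa = \kappa > \aleph_1$ --- there is a fixed inaccessible $\kappa^{*} < \kappa$ with $\mathrm{Hull}^{\mathfrak{A}}\!\paren{I} \subseteq H\!\paren{\kappa^{*}}$.

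For $\xi < \omega_1$ let $N_\xi := \mathrm{Hull}^{\mathfrak{A}}\!\paren{\curly{\gamma_\eta \colon \eta < \xi}}$, a countable elementary substructure of $\mathfrak{A}$ contained in $H\!\paren{\kappa^{*}}$; these form an increasing, continuous chain. Let $\pi_\xi \colon N_\xi \to \bar{\mathfrak{A}}_\xi$ be the transitive collapse and $\bar{\mathbb{P}}_\xi := \pi_\xi\brac{\mathbb{P} \cap N_\xi}$. The crucial claim is that $\bar{G}_\xi := \pi_\xi\brac{\curly{p \in \mathbb{P} \cap N_\xi \colon p \in G}}$ is $\bar{\mathbb{P}}_\xi$-generic over $\bar{\mathfrak{A}}_\xi$; here the factoring identity of the final Claim of Section~\ref{the_poset}, applied at $\kappa^{*}$ (so that $\mathbb{P} \cong \mathbb{P}_{\kappa^{*}} \ast \dot{\mathbb{T}} \ast \dot{\Q}$ and, since $N_\xi \subseteq H\!\paren{\kappa^{*}}$, $\mathbb{P} \cap N_\xi = \mathbb{P}_{\kappa^{*}} \cap N_\xi$), lets us reduce this to a genericity statement about the ``local'' poset $\mathbb{P}_{\kappa^{*}}$. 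Granting the claim, one lifts $\pi_\xi^{-1}$ to an elementary embedding $\sigma_\xi \colon \bar{\mathfrak{A}}_\xi[\bar{G}_\xi] \to \langle H\!\paren{\kappa}^{V[G]}, \in, \dot{\mathcal{H}}^G, \dots\rangle$ by $\sigma_\xi(\tau^{\bar{G}_\xi}) = \paren{\pi_\xi^{-1}\tau}^G$; this is well-defined and elementary by the forcing theorem together with $N_\xi \prec \mathfrak{A}$, its range $M_\xi$ contains $N_\xi$, it agrees with $\pi_\xi^{-1}$ on the ordinals, and $M_\xi$ is countable. The $M_\xi$ form an increasing continuous $\in$-chain of elementary substructures of $\langle H\!\paren{\kappa}^{V[G]}, \in, \dot{\mathcal{H}}^G, \dots\rangle$, so $\mathcal{M} := \paren{\bigcup_{\xi < \omega_1} M_\xi} \cap H\!\paren{\omega_2}^{V[G]}$ is an elementary substructure of $\dot{\mathcal{H}}^G = \mathcal{H}$.

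It remains to check the cardinalities. As $\mathcal{M} \supseteq \mathrm{Hull}^{\mathfrak{A}}\!\paren{I}$ (which has size $\aleph_1$) and each $M_\xi$ is countable, $\abs{\mathcal{M}} = \aleph_1$. For $\abs{\mathcal{M} \cap \omega_1} = \aleph_0$: because $\bar{\mathbb{P}}_\xi$ is countably closed over $\bar{\mathfrak{A}}_\xi$, the generic extension $\bar{\mathfrak{A}}_\xi[\bar{G}_\xi]$ has the same ordinals and the same $\omega_1$ as $\bar{\mathfrak{A}}_\xi$, so by elementarity of $\sigma_\xi$ every ordinal of $M_\xi$ below $\omega_1^{V[G]} = \omega_1^V$ lies in $N_\xi$. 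Hence $\mathcal{M} \cap \omega_1 = \bigcup_{\xi < \omega_1}\paren{N_\xi \cap \omega_1}$, and since $\omega_1^V < \gamma_0$ this is contained in $\bigcup_{\xi < \omega_1}\paren{N_\xi \cap \gamma_0}$, which I claim is contained in the countable set $\mathrm{Hull}^{\mathfrak{A}}\!\paren{\curly{\gamma_n \colon n < \omega}}$. Indeed, if $\alpha < \gamma_0$ and $\alpha = t\!\paren{\gamma_{\eta_0}, \dots, \gamma_{\eta_k}}$ for a $\vartriangleleft$-Skolem term $t$ and $\eta_0 < \dots < \eta_k$, then --- because $\alpha$ is an ordinal below $\gamma_0$ and, by remarkability, $I$ is a set of indiscernibles for the expansion of $\mathfrak{A}$ by constants for all ordinals $< \gamma_0$ --- the equality $t\!\paren{v_0, \dots, v_k} = \alpha$ holds of every strictly increasing $(k+1)$-tuple from $I$, in particular of $\paren{\gamma_0, \dots, \gamma_k}$, so $\alpha \in \mathrm{Hull}^{\mathfrak{A}}\!\paren{\curly{\gamma_0, \dots, \gamma_k}}$. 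By Claim~\ref{equivalent_formulation_claim}, Chang's Conjecture holds in $V[G]$.

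The step I expect to require the most work is the genericity claim that $\bar{G}_\xi$ is $\bar{\mathbb{P}}_\xi$-generic over $\bar{\mathfrak{A}}_\xi$. A dense $\bar{D} \in \bar{\mathfrak{A}}_\xi$ pulls back under $\pi_\xi$ to a $D' \in N_\xi$ which, by elementarity of $N_\xi \prec \mathfrak{A}$, is genuinely dense in $\mathbb{P}_{\kappa^{*}}$; the difficulty is that $G$ need not meet $D'$ at a condition lying in $N_\xi$. This is exactly the obstacle addressed in Silver's and Donder--Levinski's analyses of the Levy collapse and in Cummings and Schimmerling's analysis of their square-adding poset, and I would adapt those arguments: since $\mathbb{P}_{\kappa^{*}}$ is countably closed and its conditions have closed domain and support of size at most $\omega_1$, the restriction of a condition to the closure of $N_\xi \cap \kappa^{*}$ is again a condition lying in $N_\xi$ --- the coherence clauses (5) and (6) in the definition of $\mathbb{P}$, and the corresponding clauses of $\Q$, are what make such a restriction legitimate, and it is here that one uses that the $N_\xi$ arise as hulls of indiscernibles, hence have enough internal structure --- and any condition of $\mathbb{P}_{\kappa^{*}} \cap N_\xi$ extending such a restriction remains compatible with the original; a fusion argument using countable closure then shows that the $\paren{N_\xi, \mathbb{P}_{\kappa^{*}}}$-generic conditions are dense in $\mathbb{P}_{\kappa^{*}}$, so $G$ meets them, and any such condition in $G$ forces $\dot{G}$ to meet $D' \cap N_\xi$ for every relevant $D'$. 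Remarkability of $I$ keeps the chain $\langle N_\xi \colon \xi < \omega_1 \rangle$ coherent so that this runs uniformly in $\xi$. Once the genericity is secured, the verifications that $\sigma_\xi$ is well-defined, elementary, and coherent along the chain are routine forcing bookkeeping.
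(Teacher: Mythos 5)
Your reduction to Claim \ref{equivalent_formulation_claim}, the use of Silver's Lemma, and the counting at the end (size $\aleph_1$, countable trace on $\omega_1$ via remarkability) are all in the spirit of the paper's argument. But the step you isolate as ``requiring the most work'' --- that for an \emph{arbitrary} generic $G$ the filter $G \cap N_\xi$ is generic over the pre-chosen countable hull $N_\xi$, via density of $(N_\xi, \mathbb{P}_{\kappa^*})$-generic conditions --- is not just unproven, it is false, and this breaks the whole lifting scheme. The poset has ``frozen'' coordinates: if $\cf{\alpha} \in \curly{\omega, \omega_1}$ then any extension must have exactly the same value at $\alpha$. Fix $\alpha_0 \in N_\xi$ of cofinality $\omega$ and a condition $q$ with $\alpha_0 \in \dom{q}$ such that $q\!\paren{\alpha_0}$ contains a club of $\alpha_0$ that is not an element of $N_\xi$ (such clubs exist since $N_\xi$ is countable, and the width-$2$ allowance lets you add one even when coherence forces a particular club into the value). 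The set $D = \curly{p \colon \alpha_0 \in \dom{p}}$ is dense and lies in $N_\xi$, but every $p \in D \cap N_\xi$ has $p\!\paren{\alpha_0} \subseteq N_\xi$ and hence is incompatible with $q$; so no extension of $q$ is $(N_\xi, \mathbb{P})$-generic, and $q$ forces $\dot{G} \cap D \cap N_\xi = \emptyset$. Moreover such $q$ can be found below any condition (append $\alpha_0$ to the domain, end-extending the clubs at coordinates of cofinality $\geq \omega_2$ as clause (4) requires), so for \emph{every} generic $G$ and every $\xi$ the filter $G \cap N_\xi$ fails to be $N_\xi$-generic. Consequently $\sigma_\xi$ cannot be defined and $M_\xi$ need not be elementary; the same obstruction is exactly why the analogous statement fails for the Levy collapse, so the Silver/Donder--Levinski/Cummings--Schimmerling arguments you invoke cannot be ``adapted'' in this direction. (Two smaller points: the restriction of a condition to the closure of $N_\xi \cap \kappa^*$ is in general not an element of the countable model $N_\xi$, so that sub-claim also fails; and the existence of an inaccessible $\kappa^*$ above $\mathrm{Hull}^{\mathfrak{A}}(I)$ would itself need an argument that $\kappa$ is a limit of inaccessibles.)

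The paper goes in the opposite direction: instead of fixing models in $V$ and hoping $G$ cooperates, it \emph{constructs a condition} that forces the genericity. It builds, by induction of length $\omega_1$, a descending sequence $\angles{p^*_\alpha}$ of master conditions over the increasing chain of hulls $\mathcal{M}_\alpha$ of initial segments of the remarkable indiscernibles, with $p^*_\alpha \in \mathbb{P}_{\gamma_\alpha}$. The successor step uses remarkability (so that $H(\gamma_{\bar\alpha}) \cap \mathcal{M}_\alpha = \mathcal{M}_{\bar\alpha}$ and $p^*_{\bar\alpha}$ is already a master condition for $\mathbb{P}_{\gamma_{\bar\alpha}}$ over the larger model) together with the factorization $\mathbb{P} \cong \mathbb{P}_{\gamma_{\bar\alpha}} \ast \dot{\mathbb{T}} \ast \dot{\Q}$, choosing a thread $t$ through the square sequence added so far and placing it (together with, at successor-of-limit stages, the second club $F$ of suprema $\sup(\mathcal{M}_\delta \cap \kappa)$ --- the one place the ``$2$'' of $\square_{\omega_1,2}$ is used) at the coordinate $\sup(\mathcal{M}_{\bar\alpha} \cap \kappa)$. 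The final condition $p^*$ is a single master condition for the uncountable model $\mathcal{M} = \bigcup_\alpha \mathcal{M}_\alpha$, and it is this object --- the lower bound of an $\omega_1$-length descending sequence of master conditions, which exists only because the square structure of the forcing lets the suprema be threaded --- that your ``fusion argument using countable closure'' would have to produce; countable closure alone cannot, and the density statement you substitute for it is false. To recover the full ``for any generic $G$'' statement one then notes the construction can be carried out below an arbitrary condition (e.g.\ by adding it as a constant to the structure), giving density.
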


\begin{corollary}
	The existence of an $\omega_1$-Erd\H{o}s cardinal is 
	equiconsistent with ``Chang's Conjecture plus $\square_{\omega_1, 2}$.''
	\label{obvious_corollary}
\end{corollary}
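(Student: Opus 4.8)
The plan is to derive the corollary from Theorem \ref{consistency_theorem} together with the square-adding properties of $\mathbb{P}_\kappa$ recorded in Lemma \ref{cc_and_closure_lem}, and to observe that the lower bound is already supplied by the cited work of Donder. For the forward (consistency upper bound) direction, I would start from a model $V$ in which $\kappa$ is $\omega_1$-Erd\H{o}s and pass to $V[G]$ for a $\mathbb{P}_\kappa$-generic $G$. By Lemma \ref{cc_and_closure_lem}, $\mathbb{P}_\kappa$ is $\kappa$-c.c.\ and countably closed, collapses $\kappa$ to become $\aleph_2$, and adjoins a $\square_{\omega_1,2}$-sequence; hence in $V[G]$ we have $\square_{\omega_1,2}$ and $\kappa = \omega_2^{V[G]}$. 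By Theorem \ref{consistency_theorem}, $V[G]$ also satisfies Chang's Conjecture. Therefore $V[G] \models$ ``Chang's Conjecture plus $\square_{\omega_1,2}$,'' so the consistency of an $\omega_1$-Erd\H{o}s cardinal implies the consistency of the conjunction.

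For the reverse direction I would invoke Donder's theorem \cite{some_apps_of_core_model} (quoted in the introduction): Chang's Conjecture implies that $\omega_2$ is $\omega_1$-Erd\H{o}s in the core model $K$. Thus from a model of ``Chang's Conjecture plus $\square_{\omega_1,2}$'' — in particular from a model of Chang's Conjecture alone — one obtains an inner model with an $\omega_1$-Erd\H{o}s cardinal, so $\operatorname{Con}(\text{ZFC} + \text{CC} + \square_{\omega_1,2})$ implies $\operatorname{Con}(\text{ZFC} + \exists\,\omega_1\text{-Erd\H{o}s cardinal})$. Combining the two implications yields equiconsistency.

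The only genuine content here is Theorem \ref{consistency_theorem}, whose proof occupies the bulk of this section; the corollary itself is a routine packaging step. The one point worth care is that the large cardinal property used in the upper bound and the large cardinal property extracted in the lower bound match up: the forcing is arranged precisely so that the $\omega_1$-Erd\H{o}s $\kappa$ of $V$ becomes $\omega_2$ of $V[G]$, which is exactly the cardinal Donder's analysis locates in $K$. So the main ``obstacle,'' such as it is, lies entirely in Theorem \ref{consistency_theorem}; granted that theorem and Lemma \ref{cc_and_closure_lem}, the corollary is immediate.
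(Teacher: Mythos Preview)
Your proposal is correct and follows essentially the same approach as the paper's own proof: invoke Theorem \ref{consistency_theorem} and Lemma \ref{cc_and_closure_lem} for the upper bound, and cite Donder \cite{some_apps_of_core_model} for the lower bound. The paper's version is just a terser statement of exactly what you wrote.
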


\begin{proof}[Proof of Corollary \ref{obvious_corollary}]
	By Theorem \ref{consistency_theorem} 
	and Lemma \ref{cc_and_closure_lem} an $\omega_1$-Erd\H{o}s 
	cardinal suffices for the consistency of Chang's Conjecture 
	plus $\square_{\omega_1, 2}$. By \cite{some_apps_of_core_model}, 
	the consistency of Chang's Conjecture implies that of the 
	existence of of an $\omega_1$-Erd\H{o}s cardinal. 
\end{proof}

\begin{proof}[Proof of Theorem \ref{consistency_theorem}]
	Suppose that $G$ is a $\mathbb{P}$-generic over $V$. Then 
	$\omega_2^{V\brac{G}} = \kappa$ and 
	$\paren{H(\kappa)}^{V\brac{G}} = H(\kappa)\!\brac{G}$.
	Let $\mathcal{H} = \langle H(\kappa), \in, \dot{R} \rangle$, 
	which we view as a name for a structure $\mathcal{H}\!\brac{G}$
	with underlying set 
	$H(\kappa)\!\brac{G}$ and 
	predicate $R = \dot{R}^G \subseteq H(\kappa)\!\brac{G}$. 

	We seek a condition $p^* \in \mathbb{P}$ and a name $\dot{\mathcal{A}}$ for an 
	elementary substructure $\mathcal{A}$ of $\mathcal{H}\!\brac{G}
	$ such that 
	$p^*$ forces $| \dot{\mathcal{A}}| = \aleph_1$, 
	$|\mathcal{A} \cap \omega_1| = \aleph_0$. 
	With this in mind, let $I = \curly{\iota_\alpha \colon 
	\alpha < \omega_1}$ be a collection of remarkable 
	indiscernibles for $\mathcal{H}$.
	For each $\alpha < \omega_1$, let 
	$I_\alpha = \curly{\iota_\delta \colon \delta < \omega \alpha}$ be the 
	set of the first $\omega \alpha$ indiscernibles and let 
	$\gamma_\alpha = \iota_{\omega \alpha}$. Let 
	$\mathcal{M}_\alpha$ be the Skolem Hull of $I_\alpha$ in 
	$\mathcal{H}$. 
	
	We construct a sequence $\angles{p^*_\alpha \colon 1 \leq \alpha < \omega_1}$
	by induction on $\alpha$ so that: 
	\begin{enumerate}[(a)]
		\item If $1 \leq \alpha < \beta < \omega_1$ then $p_{\beta}^* \leq p_{\alpha}^*$. 
		\item $p_{\alpha}^*$ is a master condition for $\mathbb{P}$ over 
			$\mathcal{M}_\alpha$. 
		\item $p_{\alpha}^*$ is an element of $\mathbb{P}_{\gamma_\alpha}$. 
	\end{enumerate}

	We begin with the base case $\alpha = 1$.
	Consider the set $\mathbb{P} \cap \mathcal{M}_1 = \paren{\mathbb{P}
	_{\text{ON}}}^{\mathcal{M}_1}$, which is a proper class in $\mathcal{M}_1$. 
	Observe that since $\mathcal{M}_1$ is elementary in 
	$\mathcal{H}$, $\mathcal{M}_1$ satisfies ``$\mathbb{P}$ has the $<$-ON chain condition.''
	In other words, $\mathcal{M}_1$ believes that every antichain in 
	$\mathbb{P}$ is a set. For each antichain $A$ in $\mathcal{M}_1$, let 
	$A^{\downarrow} = \curly{p \in \mathbb{P} \colon \paren{\exists q \in A} p \leq q}$ 
	be the downwards closure of $A$. Let $\left\{ A_i \colon i < \omega \right\}$
	enumerate the collection of all maximal antichains which are elements of 
	$\mathcal{M}_1$. By induction we may construct a descending sequence 
	$\left\{ r_i \colon i < \omega \right\}$ of elements of 
	$\mathbb{P}$ such that $r_i \in A_i^{\downarrow} \cap \mathcal{M}_1$. 
	Let $p_1^* \in \mathbb{P}$ be the canonical lower bound for the 
	sequence $\left\{ r_i \colon i < \omega \right\}$. 
	Then $p_1^*$ is a master condition for $\mathbb{P}$
	over $\mathcal{M}_1$ and is an element of $\mathbb{P}_{\gamma_1}$, as desired. 

	Next suppose that $\alpha$ is limit. Choose a sequence $\angles{\alpha_n \colon n < \omega}$ 
	cofinal in $\alpha$, and let $p_{\alpha}^*$ be the canonical lower bound
	for $\angles{p^*_{\alpha_n} \colon n < \omega}$. It should be clear that 
	properties (a)-(c) are satisfied, since $\mathbb{P} \cap \mathcal{M}_\alpha = 
	\bigcup_{n < \omega}{\paren{\mathbb{P} \cap \mathcal{M}_{\alpha_n}}}$, and 
	$\mathbb{P} \cap \mathcal{M}_\alpha = \paren{\mathbb{P}_{\text{ON}}}^{\mathcal{M}_\alpha}$
	has the $<$-ON chain condition in $\mathcal{M}_\alpha$. 

	Finally we consider the case where $\alpha = \bar{\alpha} + 1$ is a successor 
	ordinal. We distinguish between the case where $\bar{\alpha}$ is a nonzero limit ordinal and 
	where $\bar{\alpha}$ is itself a successor ordinal, considering first the latter. 
	Since $p^*_{\bar{\alpha}}$ was chosen to be a master condition for $\mathbb{P}$ over 
	$\mathcal{M}_{\bar{\alpha}}$, we have
	
	\begin{align*}
		p_{\bar{\alpha}}^* \forces \mathcal{M}_{\bar{\alpha}}[\dot{G}] \preceq \mathcal{H}[\dot{G}]	
		\land \text{ON} \cap \mathcal{M}_{\bar{\alpha}}[\dot{G}] = \text{ON} \cap \mathcal{M}_{\bar{\alpha}}
	\end{align*}

	Consider $\mathcal{M}_\alpha$. By remarkability of the indiscernibles 
	which generate $\mathcal{M}_\alpha$, we have $H(\gamma_{\bar{\alpha}}) \cap \mathcal{M}_{\alpha} = 
	\mathcal{M}_{\bar{\alpha}}$ and $\mathbb{P}_{\gamma_{\bar{\alpha}}} \cap \mathcal{M}_{\alpha} = \mathbb{P} \cap \mathcal{M}_{
		\bar{\alpha}}$. 
		Moreover, $p^*_{\bar{\alpha}}$ is a master condition for the forcing 
		$\mathbb{P}_{\gamma_{\bar{\alpha}}}$ over the model 
		$\mathcal{M}_\alpha$, since $\mathbb{P}_{\gamma_{\bar{\alpha}}}$ has the $\gamma_{
			\bar{\alpha}}$-c.c. and
			therefore every antichain of $\mathbb{P}_{\gamma_{\bar{\alpha}}}$ in $\mathcal{M}_\alpha$
			is an element of $H(\gamma_{\bar{\alpha}}) \cap \mathcal{M}_{\alpha} = \mathcal{M}_{\bar{\alpha}}$. 
			So if we let $\dot{G}_{\gamma_{\bar{\alpha}}}$ be the canonical name for the 
			$\mathbb{P}_{\gamma_{\bar{\alpha}}}$-generic, then
	\begin{align*}
		p_{\bar{\alpha}}^* \forces \mathcal{M}_{\alpha}[\dot{G}_{\gamma_{\bar{\alpha}}}] \preceq 
		\mathcal{H}[\dot{G}_{\gamma_{\bar{\alpha}}}] \land \text{ON} \cap \mathcal{M}_\alpha[\dot{G}_{
		\gamma_{\bar{\alpha}}}] = 
		\text{ON} \cap \mathcal{M}_{\alpha}
	\end{align*}
	Working in $V$, let $\dot{\mathbb{T}} = \check{\mathbb{T}}_{\bigcup{\dot{G}_{\gamma_{\bar{\alpha}}}}}$ be the 
	canonical name for the threading 
	forcing associated to $G_{\gamma_{\bar{\alpha}}}$. Let 
	$\{ \dot{B}_i \colon i < \omega \}$ enumerate all names in $\mathcal{M}_{\alpha}$
	which are forced by $p_{\bar{\alpha}}^*$ to be maximal antichains of $\dot{\mathbb{T}}$. 
	By induction we may construct a 
	descending sequence $\left\{ \check{t}_i \colon i < \omega \right\}$ 
	of ``check-names'' (by which we mean canonical names for elements of $V$) 
	for elements of $\mathbb{T} = \dot{\mathbb{T}}^{G_{\gamma_{\bar{\alpha}}}}$ such that
	\begin{align*}	
		p_{\bar{\alpha}}^* \forces \check{t}_i \in \dot{B}_i^{\downarrow} \cap 
		\mathcal{M}_{\alpha}[ \dot{G}_{\gamma_{\bar{\alpha}}}]
	\end{align*}	
	where $\dot{B}_i^{\downarrow}$ is a name for the downwards closure of 
	$B_i = \dot{B}_{i}^{G_{\gamma_{\bar{\alpha}}}}$ in $\mathbb{T}$. Observe that we may take 
	canonical names for elements of $V$ $\check{t}_i$ rather than merely arbitrary names $\dot{t}_i$
	since $p_{\bar{\alpha}}^*$ is a master condition for $\mathbb{P}_{\gamma_{\bar{\alpha}}}$ over 
	$\mathcal{M}_{\alpha}$.

	Still working in $V$, we let 
	\begin{align*}
		t &= \bigcup_{i < \omega}{t_i} \\
		p_{\bar{\alpha}}^{**} &= p_{\bar{\alpha}}^* \cup \curly{ \paren{\sup{\paren{\mathcal{M}_{\bar{\alpha}}
	\cap \kappa}}, \curly{t}}}
	\end{align*}
	Then $p_{\bar{\alpha}}^{*} \ast \check{t}$ is a master condition for $\mathbb{P}_{\gamma_{\bar{\alpha}}} \ast \dot{\mathbb{T}}$ 
	over $\mathcal{M}_{\alpha}$. Now observe that 
	$\dot{\mathbb{Q}} = \check{\mathbb{Q}}_{\gamma_{\bar{\alpha}}, \text{ON}, \dot{G}_{\gamma_{\bar{\alpha}}}}$ 
	is definable over $\langle \mathcal{M}_{\alpha}[G_{\gamma_{\bar{\alpha}}}], \in, \mathcal{M}_\alpha \rangle$ 
	(i.e. the structure $\mathcal{M}_{\alpha}[G_{\gamma_{\bar{\alpha}}}]$ with signature expanded to include a 
	predicate for membership in $\mathcal{M}_{\alpha}$). So we may proceed as above to find 
	$\dot{q} \in \dot{\Q}$ such that 
	 $p_{\bar{\alpha}}^{*} \ast \check{t} \ast \dot{q}$ is a master condition for 
	 $\mathbb{P}_{\gamma_{\bar{\alpha}}} \ast \dot{\mathbb{T}} \ast \dot{\mathbb{Q}}$ over $\mathcal{M}_{\alpha}$. 
	 Thus if we set $p_{\alpha}^* = p_{\bar{\alpha}}^{**} \ast \check{t} \ast \dot{q}$, we may view 
	 $p_{\alpha}^*$ as a master condition for $\mathbb{P}$ over $\mathcal{M}_{\alpha}$ which 
	 extends $p_{\bar{\alpha}}^*$. We note that $p^*_{\alpha}\!\paren{\sup{(M_{\bar{\alpha}} \cap \kappa)}} = \curly{t}$. 

	 For nonzero limit $\bar{\alpha}$, the construction is exactly as above, 
	 except we modify $p^*_{\alpha}\!\paren{\sup{(M_{\bar{\alpha}} \cap \kappa)}}$ to 
	 be $\left\{ t, F \right\}$, where 
	 $t$ is a master condition for the threading poset associated to 
	 the generic for $\mathbb{P}_{\gamma_\alpha}$ (as above) and 
	 $F = \curly{\sup{ \paren{\mathcal{M}_{\delta} \cap \kappa}} \colon \delta < \bar{\alpha}}$, 
	 rather than merely taking $p^*_{\alpha}\!\paren{\sup{(M_{\bar{\alpha}} \cap \kappa)}}$ to 
	 be $\left\{ t \right\}$. 

	 Observe that this is the only place in the proof where we use the allowed ``two-ness'' of 
	 the square sequence. Moreover, in adding $F$ we preserve the coherence property since
	 its initial segments of limit length were put on the square sequence at earlier successor of limit stages. 
	 
	 Finally, at the end of the construction we set 
	 \begin{align*}
		 p^* = \bigcup_{\alpha < \omega_1}{p_\alpha^*} 
		 \cup \big\{ ( \sup{ (  \big(\bigcup_{\alpha < \omega_1}{
			 \mathcal{M}_\alpha} \big) \cap \kappa )}, F^* ) \big\}
	 \end{align*}
	 where $F^* = \curly{ \sup{ \paren{\mathcal{M}_\alpha \cap \kappa}} \colon 
 \alpha < \kappa}$. The construction ensures that this is a condition in $\mathbb{P} = \mathbb{P}_\kappa$. In particular,
 successor of limit stages ensure that the initial segments of limit length of $F^*$ appear on the square sequence, and so when 
 adding $F^*$ there is no danger of violating coherence. 
 Moreover, $p^*$
 is a master condition for $\mathbb{P}$ over $\mathcal{M} = \bigcup_{\alpha < \omega_1}{
	 \mathcal{M}_\alpha}$. Thus $p^*$ forces that $\mathcal{M}\!\brac{G}$ is the desired 
	 elementary submodel of $\mathcal{H}\!\brac{G}$. 
\end{proof}


\section{Higher Chang's Conjectures vs. Weak Squares}
\label{second_section}
In this section we concern ourselves with generalizations of Chang's Conjecture to 
higher cardinals. 

\begin{defn}
	Suppose that $\tau \leq \kappa < \lambda$ are cardinals. We write 
	$(\lambda^+, \lambda) \twoheadrightarrow (\kappa^+, \kappa)$ if 
	for every structure $\mathcal{N}$ with $\lambda^+ \subseteq \mathcal{N}$, 
	there exists $\mathcal{M} \preceq \mathcal{N}$ such that 
	$| \mathcal{M} | = \kappa^+$ and $|\mathcal{M} \cap \lambda| = \kappa$. 

	Similarly, we write $(\lambda^+, \lambda) \twoheadrightarrow_{\tau} \paren{\kappa^+, \kappa}$ if
	for every structure $\mathcal{N}$ with $\lambda^+ \subseteq \mathcal{N}$, 
	there exists $\mathcal{M} \preceq \mathcal{N}$ such that 
	$| \mathcal{M} | = \kappa^+$, $| \mathcal{M} \cap \lambda | = \kappa$, and 
	$\tau \subseteq \mathcal{M}$. 
\end{defn}

Observe that Chang's Conjecture is equivalent to $(\aleph_2, \aleph_1) \twoheadrightarrow 
(\aleph_1, \aleph_0)$ and that $\paren{\lambda^+, \lambda} \twoheadrightarrow \paren{\kappa^+, \kappa}$ 
is equivalent to $\paren{\lambda^+, \lambda} \twoheadrightarrow_{\omega} \paren{\kappa^+, \kappa}$
for any infinite cardinals $\kappa < \lambda$. Moreover, we also have: 

\begin{lem}
	Suppose that $\tau \leq \kappa < \lambda$ are infinite cardinals and there are at most 
	$\tau$ many cardinals between $\kappa$ and $\lambda$. Then $(\lambda^+, \lambda) 
	\twoheadrightarrow_{\tau} (\kappa^+, \kappa)$ implies $(\lambda^+, \lambda) \twoheadrightarrow_{\kappa}
	(\kappa^+, \kappa)$. 
	\label{equivalence_of_cc_varieties1}
\end{lem}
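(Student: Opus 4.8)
First I would reduce to the case where $\mathcal{N}=\langle H(\chi),\in,<^{*},c_{\kappa},c_{\lambda},G\rangle$ for a large regular $\chi$, with $<^{*}$ a well-ordering (so canonical Skolem functions are available), $c_{\kappa},c_{\lambda}$ constants for $\kappa,\lambda$, and $G$ a unary predicate coding a fixed surjection $g\colon\tau\to\{\mu\le\lambda:\mu\text{ a cardinal}\ge\kappa\}$, which exists by hypothesis; by an argument as in Claim~\ref{equivalent_formulation_claim} it then suffices to produce $\mathcal{M}\preceq\mathcal{N}$ with $|\mathcal{M}|=\kappa^{+}$, $|\mathcal{M}\cap\lambda|=\kappa$ and $\kappa\subseteq\mathcal{M}$. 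Applying $(\lambda^{+},\lambda)\twoheadrightarrow_{\tau}(\kappa^{+},\kappa)$ to $\mathcal{N}$ yields $M_{0}\preceq\mathcal{N}$ with $|M_{0}|=\kappa^{+}$, $|M_{0}\cap\lambda|=\kappa$ and $\tau\subseteq M_{0}$; since $g\in M_{0}$ and $\tau\subseteq M_{0}$ we get $\operatorname{ran}(g)\subseteq M_{0}$, i.e.\ every cardinal in $[\kappa,\lambda]$ lies in $M_{0}$.

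The engine of the argument is the following observation about hulls over $M_{0}$. Suppose $X\subseteq\kappa$ with $X$ (or some ordinal $\le\kappa$ containing it) in $M_{0}$, and put $N=\operatorname{Hull}^{\mathcal{N}}(X\cup M_{0})$. Then for every regular cardinal $\gamma$ with $\kappa<\gamma\le\lambda$ — in particular $\gamma=\kappa^{+}$ — one has $\sup(N\cap\gamma)\le\sup(M_{0}\cap\gamma)<\gamma$, hence $|N\cap\gamma|\le\kappa$. Indeed, any $\beta\in N\cap\gamma$ has the form $t(\vec\alpha,\vec m)$ for a Skolem term $t$, $\vec\alpha\in X^{<\omega}$, $\vec m\in M_{0}^{<\omega}$; the function $h(\vec\xi)=t(\vec\xi,\vec m)$ on $\kappa^{<\omega}$ lies in $M_{0}$, so $\operatorname{ran}(h)\cap\gamma\in M_{0}$ is a subset of $\gamma$ of size $<\gamma$, hence (by regularity of $\gamma$, reflected to $M_{0}$) bounded in $\gamma$ by an ordinal of $M_{0}\cap\gamma$, which is $\le\sup(M_{0}\cap\gamma)$; and $\sup(M_{0}\cap\gamma)<\gamma$ because $|M_{0}\cap\gamma|\le|M_{0}\cap\lambda|=\kappa<\gamma$. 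One also checks that if $\kappa\subseteq N$ then $N\cap[\kappa,\kappa^{+})$ is an initial segment of $\kappa^{+}$, using the canonical bijection $\kappa\to\beta$ (for $\beta<\kappa^{+}$), which lies in $N$.

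When $\lambda=\kappa^{+}$ this finishes the proof: take $\mathcal{M}=\operatorname{Hull}^{\mathcal{N}}(\kappa\cup M_{0})$, which has size $\kappa^{+}$, contains $\kappa$, and by the observation with $\gamma=\kappa^{+}=\lambda$ has $|\mathcal{M}\cap\lambda|\le\kappa$ (and $\ge\kappa$ since $\kappa\subseteq\mathcal{M}$). For general $\lambda$ I would run this hull construction along the cardinals of $[\kappa,\lambda]$ — there are at most $\tau$ of them, list them $\kappa=\mu_{0}<\mu_{1}<\cdots$ — building an increasing continuous elementary chain $\langle M^{(j)}:j\le\theta\rangle$ with $M^{(0)}=M_{0}$, each $M^{(j)}$ of size $\kappa^{+}$ with $\tau\subseteq M^{(j)}$ and $\kappa\subseteq M^{(1)}$, and with $|M^{(j)}\cap\lambda|=\kappa$ maintained by, at stage $j$, enlarging the model just enough that $M^{(j+1)}\cap[\mu_{j},\mu_{j+1})$ becomes closed under the canonical bijections $\mu_{j}\to\beta$; at limits take unions, which is legitimate since $|j|\cdot\kappa=\kappa$. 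Setting $\mathcal{M}=M^{(\theta)}$ gives a model with $\kappa\subseteq\mathcal{M}$ and $|\mathcal{M}\cap\lambda|=\kappa$.

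The step I expect to be the main obstacle is the successor step of that chain when $\lambda>\kappa^{+}$: a plain Skolem hull over $M^{(j)}$ can, via the $\kappa^{+}$ available parameters, blow the trace on a block $[\mu_{j},\mu_{j+1})$ with $\mu_{j}\ge\kappa^{+}$ up to size $\kappa^{+}$, so one cannot simply hull. The remedy is to feed the stage-$j$ data (the current model, coded as a single parameter or via its transitive collapse, together with $g$ and the relevant canonical bijections) back into a fresh application of $(\lambda^{+},\lambda)\twoheadrightarrow_{\tau}(\kappa^{+},\kappa)$; it is exactly here that the hypothesis ``at most $\tau$ cardinals between $\kappa$ and $\lambda$'' — through $g$ and the Bounding observation applied block by block — forces the new model back down to trace $\kappa$ while retaining $\kappa\subseteq M^{(j+1)}$. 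Modulo this bookkeeping, the remaining details are routine.
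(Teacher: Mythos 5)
Your setup is fine (coding the $\le\tau$ many cardinals of $[\kappa,\lambda]$ by a map on $\tau$ so that they all land in $M_{0}$, and the sup-bound $\sup(N\cap\gamma)\le\sup(M_{0}\cap\gamma)<\gamma$ for regular $\gamma\in(\kappa,\lambda]$ is correct and is indeed the right starting point), and your argument is complete in the special case $\lambda=\kappa^{+}$. But there are two problems beyond that. First, in your ``engine'' the inference ``$\sup(N\cap\gamma)\le\sup(M_{0}\cap\gamma)<\gamma$, hence $\abs{N\cap\gamma}\le\kappa$'' is a non sequitur for $\gamma>\kappa^{+}$: boundedness below $\gamma$ only gives $\abs{N\cap\gamma}<\gamma$. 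Second, and decisively, the step you yourself flag as the main obstacle --- controlling the trace on a block $[\mu,\mu^{+})$ with $\mu\ge\kappa^{+}$ while putting all of $\kappa$ into the model --- is exactly the content of the lemma, and your proposed remedy does not meet it: a ``fresh application'' of $(\lambda^{+},\lambda)\twoheadrightarrow_{\tau}(\kappa^{+},\kappa)$ at stage $j$ produces a new model containing only $\tau$, with no way to arrange that it contains the previous $\kappa^{+}$-sized model $M^{(j)}$ as a subset (Chang-type transfer gives you $M^{(j)}$ as an element at best), and in particular no way to get $\kappa\subseteq M^{(j+1)}$ with trace $\kappa$ --- that is the very statement being proved, so the remedy is circular as described. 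The paper discharges precisely this point by citing the argument of Case (2) of Lemma 4.15 of Sakai, which is where the real work lives.

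For what it is worth, the single-hull approach you abandon can in fact be pushed through, but it needs an argument you do not give: for $N=\mathrm{Hull}(\kappa\cup M_{0})$ one proves $\abs{N\cap\mu}\le\kappa$ by induction on the cardinals $\mu\in[\kappa^{+},\lambda]$ of the interval. At a successor block, for each $\beta\in N\cap(\mu,\mu^{+})$ the $<^{*}$-least surjection $e_{\beta}\colon\mu\to\beta$ lies in $N$, so $\abs{N\cap\beta}\le\abs{N\cap\mu}\le\kappa$; if the block had $\kappa^{+}$ many points it would therefore have order type exactly $\kappa^{+}$, making $\cf{\paren{\sup(N\cap\mu^{+})}}=\kappa^{+}$, whereas the sup-bound forces $\sup(N\cap\mu^{+})=\sup(M_{0}\cap\mu^{+})$, whose cofinality is at most $\kappa$ (or which lies in $N$, again contradicting the bound on $\abs{N\cap\beta}$). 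At limit cardinals one takes the union over the at most $\tau\le\kappa$ many earlier blocks --- this is where the hypothesis on the number of cardinals is used. Your sketch contains the ingredients (the sup bound, the bijection trick for the first block) but not this block-by-block induction, and without it, or the cited argument of Sakai, the proof is incomplete.
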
	

\begin{proof}
	The lemma is implicit in \cite{cc_weak_square}. Specifically, the conclusion of 
	the lemma holds by following the argument of Case (2) of Lemma 4.15 in 
	\cite{cc_weak_square}. 
\end{proof}

\begin{lem}
	Suppose that $\tau \leq \kappa < \lambda$ are infinite cardinals such that 
	$\lambda^\tau = \lambda$. Then $(\lambda^+, \lambda) \twoheadrightarrow 
	(\kappa^+, \kappa)$ implies $(\lambda^+, \lambda) \twoheadrightarrow_{\tau} 
	(\kappa^+, \kappa)$. 
	\label{equivalence_of_cc_varieties2}
\end{lem}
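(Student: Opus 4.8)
The plan is to leverage the equivalence $\twoheadrightarrow\; = \;\twoheadrightarrow_\omega$ noted above together with a Skolem hull argument: take a model witnessing the weaker transfer property, throw all of $\tau$ into it, and use $\lambda^\tau = \lambda$ to check that this does not enlarge the intersection with $\lambda$.

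First I would normalize. Given a structure $\mathcal{N}$ with $\lambda^+ \subseteq \mathcal{N}$, pass (by L\"owenheim--Skolem) to an elementary substructure of size $\lambda^+$ still containing $\lambda^+$, and transport it along a bijection fixing $\lambda^+$; this reduces us to the case that the underlying set of $\mathcal{N}$ is exactly $\lambda^+$ (any $\mathcal{M} \preceq \mathcal{N}$ found for the transported structure pulls back, since the bijection fixes all ordinals $< \lambda^+$). Then I would expand $\mathcal{N}$ to a structure $\mathcal{N}^+$ on $\lambda^+$, still in a countable signature, by adjoining Skolem functions, a well-order $<^*$ of $\lambda^+$, constants $c_\tau, c_\kappa, c_\lambda$ denoting $\tau, \kappa, \lambda$, a fixed surjection of $\lambda$ onto the set of partial functions $\tau^{<\omega} \rightharpoonup \lambda$, and an associated ``evaluation'' gadget (a ternary relation $E \subseteq (\lambda^+)^3$, or a binary function on $\lambda^+$) letting $\mathcal{N}^+$ compute the value of the partial function coded by a given ordinal $y < \lambda$ at a given element of $\tau^{<\omega}$. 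Such a surjection exists precisely because there are at most $\lambda^\tau = \lambda$ such partial functions, using $|\tau^{<\omega}| = \tau$ and the hypothesis $\lambda^\tau = \lambda$ (which in particular gives $2^\tau \le \lambda$). Since $\lambda^+ \subseteq \mathcal{N}^+$, applying $(\lambda^+, \lambda) \twoheadrightarrow (\kappa^+, \kappa) \; = \; \twoheadrightarrow_\omega$ to $\mathcal{N}^+$ yields $\mathcal{M}_0 \preceq \mathcal{N}^+$ with $|\mathcal{M}_0| = \kappa^+$ and $|\mathcal{M}_0 \cap \lambda| = \kappa$.

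Next I would set $\mathcal{M} = \mathrm{Hull}^{\mathcal{N}^+}(\mathcal{M}_0 \cup \tau)$, using the Skolem functions of $\mathcal{N}^+$. Then $\tau \subseteq \mathcal{M}$ by construction, $\mathcal{M} \preceq \mathcal{N}^+$ (so the reduct of $\mathcal{M}$ to the signature of $\mathcal{N}$ is an elementary substructure of $\mathcal{N}$), and $\kappa^+ \le |\mathcal{M}| \le |\mathcal{M}_0| + \tau + \aleph_0 = \kappa^+$, so $|\mathcal{M}| = \kappa^+$. It remains to see $|\mathcal{M} \cap \lambda| = \kappa$, which is the heart of the matter. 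Every $a \in \mathcal{M}$ has the form $t^{\mathcal{N}^+}(\vec m, \vec\xi)$ for some Skolem term $t$, some tuple $\vec m$ from $\mathcal{M}_0$, and some tuple $\vec\xi$ from $\tau$. For fixed $t$ and $\vec m$ consider the partial function $h_{t, \vec m} \colon \vec\xi \mapsto t^{\mathcal{N}^+}(\vec m, \vec\xi)$, restricted to those $\vec\xi$ for which the value lies below $\lambda$; this is one of the coded partial functions, and the $<^*$-least ordinal $y_{t,\vec m} < \lambda$ that codes it is definable in $\mathcal{N}^+$ from $\vec m$ and the adjoined constants (via $<^*$, $E$, $c_\tau$, $c_\lambda$, and the Skolem functions making up $t$), hence lies in $\mathcal{M}_0 \cap \lambda$. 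Consequently every element of $\mathcal{M} \cap \lambda$ is an $E$-value of some code in $\mathcal{M}_0 \cap \lambda$ at some element of $\tau^{<\omega}$, so $|\mathcal{M} \cap \lambda| \le |\mathcal{M}_0 \cap \lambda| \cdot |\tau^{<\omega}| = \kappa \cdot \tau = \kappa$. Thus the reduct of $\mathcal{M}$ witnesses $(\lambda^+, \lambda) \twoheadrightarrow_\tau (\kappa^+, \kappa)$ for $\mathcal{N}$.

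\textbf{Main obstacle.} The delicate point is exactly the cardinality bookkeeping in the last step. A priori, adjoining $\tau$ new generators to $\mathcal{M}_0$ could push $|\mathcal{M} \cap \lambda|$ up to $\kappa^+$; the only thing preventing this is $\lambda^\tau = \lambda$, and it must be exploited through the right coding --- arranging that the ``restricted Skolem functions'' $h_{t,\vec m}$ are named by ordinals below $\lambda$ rather than merely below $\lambda^+$, so that their names fall into the small set $\mathcal{M}_0 \cap \lambda$ and not into all of $\mathcal{M}_0$. Setting up that coding cleanly and verifying the definability of the names $y_{t,\vec m}$ inside $\mathcal{N}^+$ is the fiddly part; the rest is routine.
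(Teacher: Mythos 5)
Your argument is correct, but it is worth noting how it relates to the paper: the paper's entire proof of this lemma is a one-line citation (``take $B = \tau$ in Case (1) of Lemma 4.15 of Sakai''), whereas you reprove that case from scratch. Your route --- expand the structure by Skolem functions, a well-order, and an evaluation predicate enumerating, by ordinals below $\lambda$, all partial functions from $\tau^{<\omega}$ into $\lambda$ (there are at most $\lambda^{\tau} = \lambda$ of them), apply the unadorned transfer property to get $\mathcal{M}_0$, and then take $\mathcal{M} = \mathrm{Hull}(\mathcal{M}_0 \cup \tau)$, observing that each restricted Skolem function $h_{t,\vec m}$ has a $<^*$-least code in $\mathcal{M}_0 \cap \lambda$ by definability from $\vec m$, so that $\abs{\mathcal{M} \cap \lambda} \leq \kappa \cdot \tau = \kappa$ --- is essentially the standard argument behind Sakai's lemma, so mathematically you are on the same track; the payoff of your version is that it makes the paper self-contained at this point, at the cost of some bookkeeping (fixing a coding of $\tau^{<\omega}$ by ordinals below $\tau$, or working arity by arity, so that the evaluation predicate really is a relation on $\lambda^+$). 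The cardinality computations are right: $\lambda^\tau = \lambda$ gives $2^\tau \le \lambda$ and hence at most $\lambda$ partial functions, $\abs{\mathcal{M}} = \kappa^+$ since $\tau \le \kappa$, and $\abs{\mathcal{M} \cap \lambda} = \kappa$ exactly because every element of $\mathcal{M} \cap \lambda$ is an evaluation of a code lying in $\mathcal{M}_0 \cap \lambda$ at a tuple from $\tau$. No gaps beyond those routine details.
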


\begin{proof}
	Take $B = \tau$ in Case (1) of Lemma 4.15 in \cite{cc_weak_square}. 
\end{proof}

In the argument below we make use of the following claim 
without comment: 

\begin{claim}
	Suppose that for all sufficiently large $\theta$ and all 
	structures $\mathcal{H} = \langle H(\theta), \in, \dots \rangle$
	there exists $\mathcal{M} \preceq \mathcal{H}$ such that 
	$|\mathcal{M} \cap \lambda^+ | = \kappa^+$, $| \mathcal{M} \cap 
	\lambda | = \kappa$, and $\tau \subseteq \mathcal{M}$. 
	Then $(\lambda^+, \lambda) \twoheadrightarrow_{\tau} (\kappa^+, \kappa)$. 
\end{claim}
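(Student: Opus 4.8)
The plan is to run the standard coding argument from Claim \ref{equivalent_formulation_claim}, with one extra move to control the size of the extracted submodel. Given an arbitrary structure $\mathcal{N} = \langle N, R_1, R_2, \dots\rangle$ with $\lambda^+ \subseteq N$, I would first expand $\mathcal{N}$ by a set of Skolem functions and pass to the Skolem hull of $\lambda^+$, so that we may assume $|\mathcal{N}| = \lambda^+$ (passing to a reduct of an elementary substructure preserves elementarity in $\mathcal{N}$). Then, exactly as in Claim \ref{equivalent_formulation_claim}, I would fix an injection $\pi \colon N \to H(\lambda^{++})$ which is the identity on $\lambda^+$ (possible since $|N \setminus \lambda^+| \leq \lambda^+$), push $\mathcal{N}$ forward to a structure $\tilde{\mathcal{N}} = \langle \pi[N], \tilde R_1, \tilde R_2, \dots\rangle$ on a subset of $H(\lambda^{++})$, and note that $\pi[N] \in H(\lambda^{++})$.

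Next I would pick $\theta$ large enough that the hypothesis of the Claim applies and $\pi[N]$ together with the $\tilde R_i$ lie in $H(\theta)$, and apply the hypothesis to $\mathcal{H} = \langle H(\theta), \in, \pi[N], \tilde R_1, \tilde R_2, \dots\rangle$ with additional constants naming $\tau$, $\lambda$, and $\lambda^+$. This yields $\mathcal{M}_0 \preceq \mathcal{H}$ with $|\mathcal{M}_0 \cap \lambda^+| = \kappa^+$, $|\mathcal{M}_0 \cap \lambda| = \kappa$, and $\tau \subseteq \mathcal{M}_0$. The hypothesis says nothing about $|\mathcal{M}_0|$, so the one additional step — and the only point that goes beyond the folklore argument of Claim \ref{equivalent_formulation_claim} — is to replace $\mathcal{M}_0$ by $\mathcal{M}_0^- := \operatorname{Hull}^{\mathcal{H}}(\mathcal{M}_0 \cap \lambda^+)$. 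Because $\mathcal{M}_0 \cap \lambda^+ \subseteq \mathcal{M}_0 \preceq \mathcal{H}$ we get $\mathcal{M}_0^- \subseteq \mathcal{M}_0$, which forces $\mathcal{M}_0^- \cap \lambda^+ = \mathcal{M}_0 \cap \lambda^+$ and $\mathcal{M}_0^- \cap \lambda = \mathcal{M}_0 \cap \lambda$, keeps $\tau \subseteq \mathcal{M}_0 \cap \lambda^+ \subseteq \mathcal{M}_0^-$, and gives $|\mathcal{M}_0^-| = \kappa^+$ since the language is countable; so we may assume $|\mathcal{M}_0| = \kappa^+$.

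Finally I would set $\mathcal{M}$ to be the structure with universe $\pi^{-1}\brac{\mathcal{M}_0 \cap \pi[N]}$ and relations $R_1, R_2, \dots$. A routine Tarski--Vaught argument as in Claim \ref{equivalent_formulation_claim} (using that $\mathcal{H}$ knows $\tilde{\mathcal{N}}$ and its Skolem functions, so $\mathcal{M}_0 \cap \pi[N]$ is closed under them) gives $\mathcal{M}_0 \cap \pi[N] \preceq \tilde{\mathcal{N}}$, and pulling back along $\pi$ gives $\mathcal{M} \preceq \mathcal{N}$. Since $\pi$ fixes $\lambda^+ \subseteq \pi[N]$ pointwise, $\mathcal{M} \cap \lambda^+ = \mathcal{M}_0 \cap \lambda^+$ and $\mathcal{M} \cap \lambda = \mathcal{M}_0 \cap \lambda$, so $|\mathcal{M} \cap \lambda| = \kappa$, $\tau \subseteq \mathcal{M}$, and $\kappa^+ = |\mathcal{M} \cap \lambda^+| \leq |\mathcal{M}| \leq |\mathcal{M}_0| = \kappa^+$, as required. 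I do not anticipate any real obstacle: the Skolem-hull contraction pinning $|\mathcal{M}_0|$ down to $\kappa^+$ is the only nonroutine point, and everything else is bookkeeping about the coding map $\pi$.
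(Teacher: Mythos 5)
Your proposal is correct and follows essentially the same route the paper intends: it invokes the coding argument of Claim \ref{equivalent_formulation_claim} (injection $\pi$ fixing $\lambda^+$, predicates on an $H(\theta)$-structure, Tarski--Vaught, pull back), which is exactly what the paper means by ``entirely analogous.'' Your additional Skolem-hull step, replacing $\mathcal{M}_0$ by the hull of $\mathcal{M}_0 \cap \lambda^+$ to force $|\mathcal{M}_0| = \kappa^+$ (legitimate once a well-ordering of $H(\theta)$ is added to the signature so that $\mathcal{M}_0$ is closed under the definable Skolem functions), correctly addresses a detail the paper leaves implicit, since the hypothesis here bounds only the traces on $\lambda$ and $\lambda^+$ rather than $|\mathcal{M}_0|$ itself.
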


The proof is entirely analogous to that of Claim \ref{equivalent_formulation_claim}. 

\begin{lem}[Folklore]
	Suppose that $\kappa < \lambda$ are infinite cardinals and $\theta$ is a sufficiently 
	large regular cardinal. Let $M$ be an elementary substructure of $\angles{H\!\paren{\theta}, \in}$
	such that $\abs{M \cap \lambda^+} = \kappa^+$ and $\abs{M \cap \lambda} = \kappa$. Then 
	the order type of $M \cap \lambda^+$ is $\kappa^+$. 
	\label{otp_chang_substr_lem}
\end{lem}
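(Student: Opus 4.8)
The plan is to show the order type of $M \cap \lambda^+$ cannot exceed $\kappa^+$, since it obviously cannot be less than $\kappa^+$ (its cardinality is $\kappa^+$). Suppose toward a contradiction that $\otp(M \cap \lambda^+) > \kappa^+$, and let $\alpha^* \in M \cap \lambda^+$ be the $(\kappa^+)$-th element of $M \cap \lambda^+$ in increasing order, so that $\abs{M \cap \alpha^*} = \kappa^+$ while $\alpha^* < \lambda^+$. The key observation is that $\abs{\alpha^*} \leq \lambda$, so in $H(\theta)$ there is a bijection $g \colon \lambda \to \alpha^*$; by elementarity we may take such a $g$ in $M$. Then $M \cap \alpha^* = g[M \cap \lambda]$, because $g \in M$ is a surjection onto $\alpha^*$, so every element of $M \cap \alpha^*$ is $g(\xi)$ for some $\xi \in M \cap \lambda$. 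Hence $\abs{M \cap \alpha^*} \leq \abs{M \cap \lambda} = \kappa$, contradicting $\abs{M \cap \alpha^*} = \kappa^+$.

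The only delicate point is that the $(\kappa^+)$-th element $\alpha^*$ actually exists in $M \cap \lambda^+$, i.e. that $M \cap \lambda^+$ really has an initial segment of order type exactly $\kappa^+$ whose supremum is an element of $M$. Since we are assuming $\otp(M \cap \lambda^+) > \kappa^+$, the set $M \cap \lambda^+$ has at least $\kappa^+ + 1$ elements in its increasing enumeration, so the element indexed by $\kappa^+$ is a genuine member of $M \cap \lambda^+$; call it $\alpha^*$. It lies in $M$ and is an ordinal below $\lambda^+$, hence has cardinality at most $\lambda$ in $V$, and this is absolute enough that $H(\theta) \models \abs{\alpha^*} \leq \lambda$, so $M$ sees a surjection $\lambda \to \alpha^*$ as required. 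The remainder is the counting argument above.

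I expect no serious obstacle here; the one thing to be careful about is keeping straight the distinction between the order type of $M \cap \lambda^+$ (a set of ordinals, to which the enumeration argument applies directly) and its cardinality, and making sure the surjection $g$ is genuinely taken inside $M$ rather than merely in $V$. Writing it out, I would: (1) note $\otp(M \cap \lambda^+) \geq \kappa^+$ from the cardinality hypothesis; (2) assume strict inequality and extract $\alpha^* \in M$ with $\abs{M \cap \alpha^*} = \kappa^+$; (3) pull a surjection $g \colon \lambda \to \alpha^*$ into $M$ by elementarity; (4) conclude $M \cap \alpha^* \subseteq g[M \cap \lambda]$ and derive the cardinality contradiction.
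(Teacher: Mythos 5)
Your argument is correct and is essentially the paper's proof: take the $\kappa^+$-th element $\alpha^*$ of $M \cap \lambda^+$, use elementarity to find a surjection (the paper uses a bijection, first noting $\alpha^* \geq \lambda$) from $\lambda$ onto $\alpha^*$ inside $M$, and derive the contradiction $\kappa^+ = |M \cap \alpha^*| \leq |M \cap \lambda| = \kappa$. The one point you leave implicit, and which the paper makes explicit, is that $\lambda \in M$ --- needed as a parameter when invoking elementarity to get $g \in M$; it holds because $\alpha^* \geq \lambda$ (only $\kappa$ many members of $M$ lie below $\lambda$), so $\lambda = |\alpha^*|$ is definable in $H(\theta)$ from the parameter $\alpha^* \in M$.
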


\begin{proof}
	Suppose otherwise for a contradiction. Since $\abs{M \cap \lambda^+} = \kappa^+$, 
	the order type of $M \cap \lambda^+$ must be strictly greater than 
	$\kappa^+$. Let $\alpha$ be the $\kappa^{+}$ element of $M \cap \lambda^+$. Observe that 
	$\alpha \geq \lambda$ (since there are only $\kappa$ many elements of $M$ below $\lambda$)
	and hence by elementarity $\lambda = \abs{\alpha}$ is an element of $M$. Applying elementarity 
	again, there is $f \in M$ which is a bijection from $\alpha$ to $\lambda$. In particular, 
	\begin{align*}
		f``\paren{M \cap \alpha} \subseteq M \cap \lambda
	\end{align*}
	which is a contradiction since the left hand side has cardinality $\kappa^+$ (since $f$ is a bijection)
	whereas the right hand side has cardinality $\kappa$. 
\end{proof}

\begin{lem}[Folklore]
	Suppose that $M$ is an elementary substructure of $\angles{H\!\paren{\theta}, \in}$ for some 
	sufficiently large $\theta$ and $\alpha \in M$. Letting $\mu = \cf{\alpha}$, if 
	$f \in M$ is an increasing function from $\mu$ into $\alpha$ whose range is 
	cofinal in $\alpha$, then 
	\begin{align*}
		\sup{(f``\paren{M \cap \mu})} = \sup{(M \cap \alpha)}
	\end{align*} 
	\label{image_cofinal_in_sup_lemma}
\end{lem}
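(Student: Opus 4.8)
The plan is to show the two displayed quantities are equal by proving each is $\leq$ the other. One inequality is immediate: since $f \in M$ and $\mu \in M$ (as $\mu = \cf{\alpha}$ is definable from $\alpha \in M$), every element of $f``(M \cap \mu)$ is an element of $M$ below $\alpha$, so $f``(M\cap\mu) \subseteq M \cap \alpha$ and hence $\sup(f``(M\cap\mu)) \leq \sup(M\cap\alpha)$.

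For the reverse inequality, the idea is to take an arbitrary $\beta \in M \cap \alpha$ and find an element of $f``(M\cap\mu)$ that exceeds it. Since $f$ is increasing with range cofinal in $\alpha$, in $H(\theta)$ there is some $\xi < \mu$ with $f(\xi) > \beta$; by elementarity — since $f, \beta, \alpha, \mu$ are all in $M$ — such a $\xi$ can be found in $M$, i.e. there is $\xi \in M \cap \mu$ with $f(\xi) > \beta$. Then $f(\xi) \in f``(M \cap \mu)$ and $f(\xi) > \beta$, so $\sup(f``(M\cap\mu)) \geq \beta$. As $\beta \in M \cap \alpha$ was arbitrary, $\sup(f``(M\cap\mu)) \geq \sup(M\cap\alpha)$, completing the argument.

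There is no real obstacle here; this is a routine elementarity argument. The only points that need a word of care are that $\mu \in M$ (which follows because $\cf$ is definable and $\alpha \in M$, using that $\theta$ is large enough that $H(\theta)$ computes cofinalities correctly), and the trivial degenerate cases — if $\alpha$ has a maximum element of $M$ below it one still has cofinality via $f$, and if $M \cap \alpha$ is empty both sides are $\sup\emptyset$; these cause no difficulty. I would present the two-inequality structure in two short paragraphs and invoke elementarity of $M$ in $\langle H(\theta), \in\rangle$ for the nontrivial direction.
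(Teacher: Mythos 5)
Your proposal is correct and follows essentially the same route as the paper: the easy inequality from $f\in M$ mapping $M\cap\mu$ into $M\cap\alpha$, and the reverse via elementarity producing $\xi\in M\cap\mu$ with $f(\xi)>\beta$ for a given $\beta\in M\cap\alpha$ (the paper merely phrases this direction as a proof by contradiction rather than a direct supremum comparison).
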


\begin{proof}
	Clearly $\sup{(f``(M \cap \mu))} \leq \sup{(M \cap \alpha)}$, since $f \in M$ and 
	$f \colon \mu \rightarrow \alpha$. For equality, suppose for a 
	contradiction that 
\begin{align*}
	\sup{(f``(M \cap \mu))} < \sup{(M \cap \alpha)}
\end{align*}
and choose $\beta \in M \cap \alpha$ such that $\beta > \sup{\paren{f``\paren{M \cap \alpha}}}$. 
By elementarity 
\begin{align*}
	M \models \paren{\exists \,\xi \in \mu}\paren{f \!\paren{\xi} > \beta}
\end{align*}
and so choosing $\xi_0 \in M \cap \mu$ to witness the existential statement above we have: 
\begin{align*}
	\sup{(f``(M \cap \mu))}	< \beta < f\!\paren{\xi_0}
\end{align*}
an obvious contradiction. 
\end{proof}

\begin{theorem}
	Suppose that $\kappa < \lambda$ are uncountable cardinals and $\tau \leq \kappa$ is infinite. 
	Suppose moreover that $\square_{\lambda, \tau}$ holds. Then 
	$\paren{\lambda^+, \lambda} \twoheadrightarrow_{\tau}
	\paren{\kappa^+, \kappa}$ fails.
	\label{incompatibility_theorem}
\end{theorem}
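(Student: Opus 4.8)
The plan is to argue by contradiction: assume both $\square_{\lambda, \tau}$ and $(\lambda^+, \lambda) \twoheadrightarrow_\tau (\kappa^+, \kappa)$. Fix a $\square_{\lambda,\tau}$-sequence $\vec{\mathcal{C}} = \angles{\mathcal{C}_\alpha \colon \alpha < \lambda^+}$, and let $\theta$ be a large regular cardinal with $\vec{\mathcal{C}} \in H(\theta)$. Using the transfer property (in the $H(\theta)$ formulation, which is legitimate by the Claim preceding Lemma \ref{otp_chang_substr_lem}), I would obtain $M \preceq \angles{H(\theta), \in, \vec{\mathcal{C}}, \vartriangleleft}$ — where $\vartriangleleft$ is a fixed wellorder of $H(\theta)$ used to get canonical Skolem functions — with $\abs{M \cap \lambda^+} = \kappa^+$, $\abs{M \cap \lambda} = \kappa$, and $\tau \subseteq M$. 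Set $\delta = \sup(M \cap \lambda^+)$. By Lemma \ref{otp_chang_substr_lem}, $\otp(M \cap \lambda^+) = \kappa^+$, so $\cf(\delta) = \kappa^+$; in particular $\delta$ is a limit ordinal of uncountable cofinality and $\delta < \lambda^+$, so $\mathcal{C}_\delta$ is defined.

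The heart of the argument is to use coherence of $\vec{\mathcal{C}}$ to transfer structure from $\delta$ (which "sees" $\kappa^+$) down past $\lambda$ (which $M$ only "sees" $\kappa$ of), deriving a contradiction. Fix $C \in \mathcal{C}_\delta$; it is a club in $\delta$ with $\otp(C) \leq \lambda$. Since $\vec{\mathcal{C}} \in M$ but $\delta \notin M$, one works with $C \cap \xi$ for $\xi \in M \cap \delta$: for each limit point $\xi$ of $C$ that lies in $M$, coherence gives $C \cap \xi \in \mathcal{C}_\xi \in M$, and since $\abs{\mathcal{C}_\xi} \leq \tau \leq \kappa$ with $\mathcal{C}_\xi \in M$ and (if $\tau \subseteq M$, all elements of a set of size $\leq\tau$ in $M$ can be enumerated inside $M$) we get $C \cap \xi \in M$. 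Thus, letting $E = \{\xi \in M \cap \delta : \xi \text{ a limit point of } C\}$, each initial segment $C \cap \xi$ for $\xi \in E$ is an element of $M$. Now $\sup E = \delta$: given any $\eta \in M \cap \delta$, by elementarity $M$ knows $C$ is unbounded below $\delta$... but $\delta \notin M$, so this needs care — instead I would use that $M \cap \delta$ is $\omega$-club-ish in $\delta$ of cofinality $\kappa^+ > \omega$, hence meets the club $\mathrm{Lim}(C)$ cofinally in $\delta$. So the map $\xi \mapsto C \cap \xi$ sends a cofinal subset of $M \cap \delta$ into $M$, and $\otp(C \cap \xi)$ takes cofinally many values below $\otp(C) \leq \lambda$; since these order types are elements of $M$ (being definable from $C \cap \xi \in M$), we get $\sup(M \cap \otp(C))$ has cofinality $\kappa^+$. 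But $\otp(C) \leq \lambda$, so $M \cap \otp(C) \subseteq M \cap \lambda^+$ has order type of cofinality $\kappa^+$, which is consistent — so the contradiction must instead come from counting: I would push to show $\abs{M \cap \lambda} \geq \kappa^+$, contradicting $\abs{M \cap \lambda} = \kappa$.

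Concretely, the cleanest route: let $\mu = \otp(C) \le \lambda$, and let $e \colon \mu \to C$ be the increasing enumeration, which is definable from $C$ and $\vec{\mathcal C}$... but again $C \notin M$. So instead work segment-by-segment: for cofinally many $\xi \in M \cap \delta$ we have $C \cap \xi \in M$ and hence $\otp(C \cap \xi) \in M \cap \mu \subseteq M \cap \lambda^+$. If $\mu < \lambda^+$ is $\le \lambda$, then since $\sup$ of these order types is $\mu$ (as $\xi \to \delta$) and they are cofinal, $\mu \in M$ or at least $\cf(\mu) \le$ something — more usefully, $M \cap \mu$ has size $\kappa^+$. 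If $\mu \le \lambda$ then $\abs{M \cap \mu} \le \abs{M \cap \lambda} = \kappa$ when $\mu < \lambda$, and $= \kappa$ when $\mu = \lambda$; either way $\le \kappa < \kappa^+$, the contradiction. The remaining case is $\mu = \lambda$ exactly and requires checking $\abs{M \cap \lambda} \ge \kappa^+$: the $\kappa^+$ many distinct values $\otp(C \cap \xi)$ as $\xi$ ranges over a cofinal subset of $E$ are $\kappa^+$ distinct ordinals $< \lambda$ each belonging to $M$, giving $\abs{M \cap \lambda} \ge \kappa^+$, contradiction. The main obstacle I anticipate is the bookkeeping around $\delta \notin M$: one cannot apply elementarity directly at $\delta$, so every use of "$C$ is club in $\delta$" must be replaced by an argument that $M \cap \delta$, being $\kappa^+$-cofinal and closed under the relevant Skolem functions, meets $\mathrm{Lim}(C)$ cofinally — and one must verify that membership of the small sets $\mathcal{C}_\xi$ in $M$ together with $\tau \subseteq M$ genuinely yields $C \cap \xi \in M$, which is exactly where the hypothesis $\tau \subseteq M$ (i.e. the $\twoheadrightarrow_\tau$ rather than plain $\twoheadrightarrow$) and $\abs{\mathcal C_\xi}\le\tau$ are both used.
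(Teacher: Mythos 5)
Your reduction of the theorem to the claim that $M\cap\delta$ meets $\Lim(C)$ cofinally in $\delta$ is where the argument breaks down. The transfer property $(\lambda^+,\lambda)\twoheadrightarrow_\tau(\kappa^+,\kappa)$ only supplies an elementary submodel $M$ with the stated cardinality trace and $\tau\subseteq M$; it gives no closure of $M$ under $\omega$-sequences (or limits of any kind), so $M\cap\delta$ is cofinal in $\delta$ but in general not closed, and a cofinal non-closed set of order type $\kappa^+$ need not meet the club $\Lim(C)$ cofinally, or even unboundedly. Your justification (``$M\cap\delta$ is $\omega$-club-ish'') would require constructing $M$ closed under countable sequences, which is exactly the kind of extra hypothesis (e.g.\ $\lambda^{<\lambda}=\lambda$ in Sakai's Theorem \ref{sakais_constraint}) that this theorem is designed to avoid. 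Worse, your own counting shows the claim cannot be salvaged: as in Claims \ref{ordertypes_not_in_m} and \ref{ordinals_not_in_m} of the paper, if $\alpha\in\Lim(C^*)\cap M$ then $C^*\cap\alpha\in M$ and hence $\otp(C^*\cap\alpha)$ is an element of $M\cap\lambda$, and since these order types are distinct and $\abs{M\cap\lambda}=\kappa$ while $\cf(\delta)=\kappa^+$, only boundedly many limit points of $C^*$ can lie in $M$. In other words, the observation you hoped would produce $\kappa^+$ many elements of $M\cap\lambda$ instead proves that $\Lim(C^*)\cap M$ is bounded in $\delta$, and no contradiction is reached along this route.

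What you have correctly identified (coherence plus $\abs{\mathcal{C}_\xi}\le\tau$ and $\tau\subseteq M$ give $C^*\cap\xi\in M$ for $\xi\in\Lim(C^*)\cap M$) is used in the paper only as the lemma that sufficiently large limit points of $C^*$ avoid $M$. The actual contradiction requires substantially more: one passes to the map $\alpha\mapsto\alpha^{\uparrow}$ (the least element of $M$ above $\alpha$), studies the set $Z$ of cofinalities $\cf(\alpha^{\uparrow})$ realized unboundedly often on $\Lim(C^*\cap D)$, shows via Lemma \ref{image_cofinal_in_sup_lemma} that $Z$ contains some $\mu_1\ge\kappa^+$ and also a second, smaller value $\mu_2$, and then builds the set $U$ of ordinals $\sup(C\cap\alpha_1^{\uparrow})+1$ for $C\in\bigcup_{\xi\in E}\mathcal{C}_\xi$ with $E\in M$ cofinal in $\alpha_2^{\uparrow}$ of type $\mu_2$. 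Since $U\in M$ and $\abs{U}<\mu_1=\cf(\alpha_1^{\uparrow})$, $U$ is bounded in $\alpha_1^{\uparrow}$ with $\sup U\in M$, forcing $\sup U\le\alpha_1$, while $U$ contains $\sup(C^*\cap\alpha_1^{\uparrow})+1>\alpha_1$ --- the contradiction. None of this machinery is present or recoverable from your outline, so the proposal as written does not prove the theorem.
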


\begin{corollary}
	Suppose that $\kappa < \lambda$ are uncountable cardinals and $(\lambda^+, \lambda) 
	\twoheadrightarrow (\kappa^+, \kappa)$
	holds. Then $\square_{\lambda, \omega}$ fails. 
	\label{zeroeth_big_corollary}
\end{corollary}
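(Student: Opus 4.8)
The plan is to read this off directly from Theorem \ref{incompatibility_theorem} by specializing the parameter $\tau$ to $\omega$. The first step is to recall the equivalence noted just after the definition of the parameterized transfer property: for any infinite cardinals $\kappa < \lambda$, the plain transfer $\paren{\lambda^+, \lambda} \twoheadrightarrow \paren{\kappa^+, \kappa}$ is the same as $\paren{\lambda^+, \lambda} \twoheadrightarrow_{\omega} \paren{\kappa^+, \kappa}$. Thus the hypothesis of the corollary is literally the statement that $\paren{\lambda^+, \lambda} \twoheadrightarrow_{\omega} \paren{\kappa^+, \kappa}$ holds.

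Next I would verify that $\tau = \omega$ satisfies the hypotheses of Theorem \ref{incompatibility_theorem}. Since $\kappa$ is uncountable we have $\omega \leq \kappa$, and $\omega$ is of course infinite, so the requirement that $\tau \leq \kappa$ be infinite is met; the requirement that $\kappa < \lambda$ be uncountable is given outright. Hence Theorem \ref{incompatibility_theorem} applies with $\tau = \omega$.

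Finally I would argue by contraposition. Assume toward a contradiction that $\square_{\lambda, \omega}$ holds. Then Theorem \ref{incompatibility_theorem} (taken with $\tau = \omega$) yields that $\paren{\lambda^+, \lambda} \twoheadrightarrow_{\omega} \paren{\kappa^+, \kappa}$ fails. This contradicts the hypothesis of the corollary, which by the equivalence recalled above is precisely the assertion that this $\omega$-parameterized transfer holds. Therefore $\square_{\lambda, \omega}$ must fail. There is no real obstacle at this stage beyond correctly matching the parameter choice $\tau = \omega$ against the theorem's hypotheses; all the substantive combinatorics is already contained in Theorem \ref{incompatibility_theorem}.
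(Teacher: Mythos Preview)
Your argument is correct and matches the paper's own proof exactly: specialize $\tau = \omega$ in Theorem \ref{incompatibility_theorem} and use the observed equivalence $\paren{\lambda^+, \lambda} \twoheadrightarrow \paren{\kappa^+, \kappa} \iff \paren{\lambda^+, \lambda} \twoheadrightarrow_{\omega} \paren{\kappa^+, \kappa}$. The paper simply states this as ``immediate from the theorem'' together with that equivalence, so your slightly more explicit version is just a spelled-out form of the same reasoning.
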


\begin{proof}
	Immediate from the theorem and the fact that $\paren{\lambda^+, \lambda} \twoheadrightarrow 
	\paren{\kappa^+, \kappa}$ is equivalent to $\paren{\lambda^+, \lambda} \twoheadrightarrow_{\omega}
	\paren{\kappa^+, \kappa}$. 
\end{proof}

\begin{corollary}
	Suppose that $\kappa < \lambda$ are uncountable cardinals and there are at most 
	countably many cardinals between $\kappa$ and $\lambda$. 
	Then $\paren{\lambda^+, \lambda} \twoheadrightarrow \paren{\kappa^+, \kappa}$ implies the 
	failure of $\square_{\lambda, \kappa}$. 
	\label{first_big_corollary}
\end{corollary}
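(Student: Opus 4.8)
The plan is to obtain the conclusion by chaining three facts already available in this section: the equivalence of $\paren{\lambda^+,\lambda}\twoheadrightarrow\paren{\kappa^+,\kappa}$ with $\paren{\lambda^+,\lambda}\twoheadrightarrow_{\omega}\paren{\kappa^+,\kappa}$, Lemma \ref{equivalence_of_cc_varieties1}, and Theorem \ref{incompatibility_theorem}. No new combinatorics is needed; the corollary is pure bookkeeping.

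First I would rewrite the hypothesis $\paren{\lambda^+,\lambda}\twoheadrightarrow\paren{\kappa^+,\kappa}$ as $\paren{\lambda^+,\lambda}\twoheadrightarrow_{\omega}\paren{\kappa^+,\kappa}$, using the observation recorded immediately after the definition of $\twoheadrightarrow_{\tau}$ (valid since $\kappa<\lambda$ are infinite). Next, since by assumption there are at most countably many cardinals in the interval $(\kappa,\lambda)$, the hypotheses of Lemma \ref{equivalence_of_cc_varieties1} are met with $\tau=\omega$ — here one uses that $\kappa$ is uncountable, so $\omega\le\kappa<\lambda$ — and that lemma upgrades $\paren{\lambda^+,\lambda}\twoheadrightarrow_{\omega}\paren{\kappa^+,\kappa}$ to $\paren{\lambda^+,\lambda}\twoheadrightarrow_{\kappa}\paren{\kappa^+,\kappa}$.

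Finally I would apply Theorem \ref{incompatibility_theorem} with $\tau=\kappa$, which is legitimate because $\kappa<\lambda$ are uncountable and $\kappa\le\kappa$ is infinite. The theorem gives that $\square_{\lambda,\kappa}$ implies the failure of $\paren{\lambda^+,\lambda}\twoheadrightarrow_{\kappa}\paren{\kappa^+,\kappa}$; contrapositively, having just established $\paren{\lambda^+,\lambda}\twoheadrightarrow_{\kappa}\paren{\kappa^+,\kappa}$, we conclude that $\square_{\lambda,\kappa}$ fails, as desired.

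The only point requiring any care — and it is the closest thing to an obstacle here — is verifying that the side conditions line up: that the "at most countably many cardinals between $\kappa$ and $\lambda$" hypothesis is precisely what permits invoking Lemma \ref{equivalence_of_cc_varieties1} at the parameter $\tau=\omega$, and that uncountability of $\kappa$ ensures $\omega\le\kappa$ so the chain of transfer principles $\twoheadrightarrow\;\Rightarrow\;\twoheadrightarrow_\omega\;\Rightarrow\;\twoheadrightarrow_\kappa$ is well-formed and Theorem \ref{incompatibility_theorem} applies at $\tau=\kappa$.
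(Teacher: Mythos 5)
Your proof is correct and is essentially the same as the paper's: the paper also derives the corollary by combining Lemma \ref{equivalence_of_cc_varieties1} (applied with $\tau=\omega$, using the hypothesis on the number of cardinals between $\kappa$ and $\lambda$) with Theorem \ref{incompatibility_theorem} at $\tau=\kappa$. Your write-up just spells out the bookkeeping that the paper leaves implicit.
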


\begin{proof}
	This follows immediately from Theorem \ref{incompatibility_theorem} and Lemma \ref{equivalence_of_cc_varieties1} 
	by taking $\tau = \omega$. 
\end{proof}

Observe that the same argument shows that if there are at most $\tau$ many cardinals between 
$\kappa$ and $\lambda$ then $(\lambda^+, \lambda) \twoheadrightarrow_{\tau} \paren{\kappa^+, \kappa}$ 
implies the failure of $\square_{\lambda, \kappa}$. 

\begin{corollary}
	Suppose that $\kappa < \lambda$ are uncountable cardinals and $\tau \leq \kappa$ is some 
	infinite cardinal with $\lambda^\tau = \lambda$. Then 
	$(\lambda^+, \lambda) \twoheadrightarrow (\kappa^+, \kappa)$ implies the failure of 
	$\square_{\lambda, \tau}$. 
	\label{second_big_corollary}
\end{corollary}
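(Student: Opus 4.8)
The plan is to derive this as an immediate consequence of Theorem~\ref{incompatibility_theorem} together with Lemma~\ref{equivalence_of_cc_varieties2}, so the proof will be a short chaining of two already-established facts. First I would observe that the extra hypothesis $\lambda^\tau = \lambda$ is precisely what is needed to invoke Lemma~\ref{equivalence_of_cc_varieties2}: starting from $(\lambda^+, \lambda) \twoheadrightarrow (\kappa^+, \kappa)$, that lemma upgrades the transfer property to $(\lambda^+, \lambda) \twoheadrightarrow_{\tau} (\kappa^+, \kappa)$, i.e. to the version in which the small elementary submodel is additionally required to contain $\tau$ as a subset.

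Next I would apply Theorem~\ref{incompatibility_theorem} in contrapositive form. Its hypotheses are that $\kappa < \lambda$ are uncountable and that $\tau \leq \kappa$ is infinite, all of which are assumed here; its conclusion is that $\square_{\lambda, \tau}$ implies the failure of $(\lambda^+, \lambda) \twoheadrightarrow_{\tau} (\kappa^+, \kappa)$. Since the previous paragraph establishes that $(\lambda^+, \lambda) \twoheadrightarrow_{\tau} (\kappa^+, \kappa)$ does hold, it follows that $\square_{\lambda, \tau}$ fails, which is the desired conclusion.

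I do not anticipate any genuine obstacle: all of the real work sits inside Theorem~\ref{incompatibility_theorem} and Lemma~\ref{equivalence_of_cc_varieties2}. The only thing requiring a moment's attention is the bookkeeping of the cardinal hypotheses — checking that $\tau$ is infinite and that $\tau \leq \kappa < \lambda$ with $\kappa, \lambda$ uncountable, so that both auxiliary results genuinely apply — but this is immediate from the statement of the corollary.
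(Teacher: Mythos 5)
Your proposal is correct and follows exactly the paper's own route: Lemma~\ref{equivalence_of_cc_varieties2} (using $\lambda^\tau = \lambda$) upgrades the transfer property to $(\lambda^+, \lambda) \twoheadrightarrow_{\tau} (\kappa^+, \kappa)$, and Theorem~\ref{incompatibility_theorem} then yields the failure of $\square_{\lambda, \tau}$. Nothing is missing.
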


\begin{proof}
	Immediate from Theorem \ref{incompatibility_theorem} and Lemma \ref{equivalence_of_cc_varieties2}. 
\end{proof}

\begin{proof}[Proof of Theorem \ref{incompatibility_theorem}]
	Suppose for a contradiction that $\square_{\lambda, \tau}$ held in conjunction with 
	$\paren{\lambda^+, \lambda} \twoheadrightarrow_{\tau} \paren{\kappa^+, \kappa}$, and let 
	$\vec{ \mathcal{C}} = \angles{ \mathcal{C}_{\xi} \colon \xi < \lambda^+}$ be a 
	$\square_{\lambda, \tau}$ sequence. Choose $M$ an elementary substructure of 
	$\langle H\!\paren{\theta}, \in, \vec{ \mathcal{C}} \rangle$ (for sufficiently large 
	$\theta$) such that $|M \cap \lambda^+ |
	= \kappa^+$, $| M \cap \lambda| = \kappa$, and $\tau \subseteq M$. 

	Fix a club $C^* \in \mathcal{C}_{\sup{(M \cap \lambda^+)}}$.
	By Lemma \ref{otp_chang_substr_lem}, we may choose a club $D$ in 
	$\sup{(M \cap \lambda^+)}$ of ordertype $\kappa^+$. We assume 
	moreover that $D$ consists only of limits of ordinals in $M$.

\begin{claim}
	For all sufficiently large $\alpha \in C^*$, the ordertype of 
$C^* \cap \alpha$ is not an element of $M$. 
	\label{ordertypes_not_in_m}
\end{claim}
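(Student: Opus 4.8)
The claim is that for all sufficiently large $\alpha \in C^*$, $\otp(C^* \cap \alpha) \notin M$. The plan is to argue by contradiction: suppose that the set $S = \{\alpha \in C^* : \otp(C^* \cap \alpha) \in M\}$ is cofinal in $\sup(M \cap \lambda^+)$. The key observation is that $\otp(C^*) \leq \lambda$ (by property (2) of a square sequence), so each ordertype $\otp(C^* \cap \alpha)$ is an ordinal $< \lambda$. If such an ordertype lies in $M$, then since $|M \cap \lambda| = \kappa$, there are only $\kappa$ many possible values. But I want to derive a contradiction from $S$ being cofinal — the natural move is to use the coherence of $C^*$ together with the fact that $M \cap \lambda^+$ has ordertype $\kappa^+$, which has uncountable (indeed $> \kappa$) cofinality, so $\kappa$ many points cannot be cofinal in it once we relate $S$ to $M \cap \lambda^+$ correctly.

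**The main steps.** First I would note that $\sup(M \cap \lambda^+) = \delta$ has cofinality $\kappa^+$ by Lemma \ref{otp_chang_substr_lem} (its ordertype is $\kappa^+$), so $D$ witnesses $\cf(\delta) = \kappa^+$. Second, for each $\alpha \in C^*$ with $\otp(C^* \cap \alpha) = \rho_\alpha \in M$: by elementarity applied to $\langle H(\theta), \in, \vec{\mathcal C}\rangle$, since $M$ knows $\delta$... wait, $\delta \notin M$ in general. The cleaner route: fix $\alpha \in S$. Then $\rho_\alpha = \otp(C^* \cap \alpha) \in M \cap \lambda$. I claim the map $\alpha \mapsto \rho_\alpha$ on $S$ is injective and order-preserving (it is strictly increasing since $C^*$ is a club and ordertypes of proper initial segments strictly increase), so $|S| \leq |M \cap \lambda| = \kappa$. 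Hence $S$ is not cofinal in $\delta$ (as $\cf(\delta) = \kappa^+ > \kappa$), which gives exactly the conclusion: for all sufficiently large $\alpha \in C^*$, $\alpha \notin S$, i.e. $\otp(C^* \cap \alpha) \notin M$.

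**The subtle point.** The one thing that needs care is injectivity/monotonicity of $\alpha \mapsto \otp(C^* \cap \alpha)$: this requires that distinct $\alpha < \alpha'$ in $C^*$ give distinct ordertypes, which holds because $C^*$ is club in $\delta$, hence unbounded, so $C^* \cap \alpha \subsetneq C^* \cap \alpha'$ as a proper initial segment, forcing $\otp(C^* \cap \alpha) < \otp(C^* \cap \alpha')$. So in fact $S$ order-embeds into $M \cap \lambda$ (noting every $\rho_\alpha < \otp(C^*) \leq \lambda$, and $\rho_\alpha \in M$), giving $|S| \leq \kappa$. The only mild wrinkle is confirming $\rho_\alpha < \lambda$ rather than $\rho_\alpha = \lambda$, but a proper initial segment of a set of ordertype $\leq \lambda$ has ordertype $< \lambda$ — actually if $\otp(C^*) = \lambda$ exactly and $\lambda$ is a limit ordinal this is immediate, and $\lambda$ being a cardinal $> \kappa$ is certainly a limit ordinal. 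I expect this bookkeeping about ordertypes to be the only real content; the cofinality count is then immediate. No large-cardinal or forcing machinery is needed here — just the definition of a square sequence and Lemma \ref{otp_chang_substr_lem}.
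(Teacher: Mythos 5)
Your proposal is correct and is essentially the paper's own argument: the ordertypes $\otp(C^*\cap\alpha)$ for $\alpha\in C^*$ are distinct ordinals below $\lambda$, so at most $|M\cap\lambda|=\kappa$ of them lie in $M$, and since $\cf(\sup(M\cap\lambda^+))=\kappa^+$ (via Lemma \ref{otp_chang_substr_lem}) the set of bad $\alpha$ is bounded. Your extra care about injectivity and about $\otp(C^*\cap\alpha)<\lambda$ is fine but is exactly the bookkeeping the paper treats as immediate.
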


\begin{proof}
	These ordertypes are distinct elements of $\lambda$, and since $| M \cap \lambda | 
	= \kappa$, 
	at most $\kappa$ of them can belong to $M$. Since the cofinality of 
	$\sup{C^*} = \sup{(M \cap \lambda^+)}$ is $\kappa^+$, the result follows immediately. 
\end{proof}

\begin{claim}
	For all sufficiently large $\alpha \in \Lim{C^*}$, $\alpha \notin M$. 
	\label{ordinals_not_in_m}
\end{claim}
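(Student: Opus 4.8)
The plan is to play Claim~\ref{ordertypes_not_in_m} against the coherence of $\vec{\mathcal{C}}$. Write $\delta = \sup(M\cap\lambda^+)$, so that $C^*$ is club in $\delta$ and $\cf(\delta)=\kappa^+$ by Lemma~\ref{otp_chang_substr_lem}; in particular $\Lim(C^*)$ is cofinal in $\delta$. Negating the claim amounts to saying that $\Lim(C^*)\cap M$ is unbounded in $\delta$. So it suffices to show that if $\alpha\in\Lim(C^*)\cap M$ is chosen large enough that the conclusion of Claim~\ref{ordertypes_not_in_m} holds at $\alpha$ (which is possible, by the assumed unboundedness), then we reach a contradiction.

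So suppose $\alpha\in\Lim(C^*)\cap M$ with $\otp(C^*\cap\alpha)\notin M$. Against this I would argue $\otp(C^*\cap\alpha)\in M$. First, since $\alpha$ is a limit point of $C^*\in\mathcal{C}_\delta$, coherence (clause~(3) of the definition of a $\square_{\lambda,\tau}$-sequence) gives $C^*\cap\alpha\in\mathcal{C}_\alpha$. Next, $\alpha\in M$ and $\vec{\mathcal{C}}$ is a predicate of the ambient structure, so $\mathcal{C}_\alpha\in M$; hence it is enough to show $\mathcal{C}_\alpha\subseteq M$, since then $C^*\cap\alpha\in M$ and its order type is definable from it, so also lies in $M$.

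For the inclusion $\mathcal{C}_\alpha\subseteq M$: clause~(1) of the definition gives $\abs{\mathcal{C}_\alpha}\le\tau$, so by elementarity there is an injection $e\colon\mathcal{C}_\alpha\to\tau$ with $e\in M$. For each $C\in\mathcal{C}_\alpha$ we have $e(C)<\tau$, hence $e(C)\in M$ because $\tau\subseteq M$ by hypothesis; and $C$ is the unique member of $\dom(e)$ that $e$ sends to $e(C)$, so $C\in M$ by elementarity. This contradicts the choice of $\alpha$ and proves the claim.

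The substantive point — the only step beyond routine bookkeeping — is $\mathcal{C}_\alpha\subseteq M$: the local square family at $\alpha$ has at most $\tau$ members, and since $M$ already contains all of $\tau$ together with a witnessing injection into $\tau$, every member of $\mathcal{C}_\alpha$ is captured by $M$. Coherence then forces $C^*\cap\alpha$, and hence its order type, into $M$, which is precisely what Claim~\ref{ordertypes_not_in_m} rules out for large $\alpha$. Note this is exactly where the hypothesis $\tau\subseteq M$ (i.e.\ the $\twoheadrightarrow_\tau$ strengthening rather than plain $\twoheadrightarrow$) is used.
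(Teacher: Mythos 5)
Your argument is correct and is essentially the paper's proof: for $\alpha\in\Lim{(C^*)}\cap M$, coherence puts $C^*\cap\alpha\in\mathcal{C}_\alpha$, and $\abs{\mathcal{C}_\alpha}\le\tau$ together with $\tau\subseteq M$ gives $\mathcal{C}_\alpha\subseteq M$, hence $\otp{(C^*\cap\alpha)}\in M$, contradicting Claim \ref{ordertypes_not_in_m} for large $\alpha$. Your injection-into-$\tau$ step is just a spelled-out version of the parenthetical justification the paper gives for $\mathcal{C}_\alpha\subseteq M$.
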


\begin{proof}
	Choose $\alpha \in \Lim{C^*}$ and note that $C^* \cap \alpha \in \mathcal{C}_{\alpha}$. 
	If $\alpha \in M$, then $\mathcal{C}_{\alpha} \subseteq M$ (since $| \mathcal{C}_{\alpha} | \leq \tau$ and 
	$\tau \subseteq M$) and so in particular  $C^* \cap \alpha \in M$, giving $\otp{(C^* \cap \alpha)} \in M$. 
	By Claim \ref{ordertypes_not_in_m}, this may happen for only boundedly many 
	$\alpha \in C^*$. 
\end{proof}

For each $\alpha$ below $\sup{(M \cap \lambda^+)}$, let $\alpha^{\uparrow}$ denote the least element of $M$ which is 
$\geq \alpha$. 

\begin{claim}
	For all sufficiently large $\alpha \in \Lim{(C^* \cap D)}$, 
	$\alpha^{\uparrow}$ is strictly greater than $\alpha$. 
	\label{alpha_prime_bigger}
\end{claim}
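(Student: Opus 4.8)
The plan is to argue by contradiction: suppose that for cofinally many $\alpha \in \Lim{(C^* \cap D)}$ we had $\alpha^{\uparrow} = \alpha$, i.e., $\alpha \in M$. First I would observe that $\Lim{(C^* \cap D)}$ is itself a club in $\sup{(M \cap \lambda^+)}$ (being the set of limit points of the intersection of two clubs), and in particular $\Lim{(C^* \cap D)} \subseteq \Lim{C^*}$. So if $\alpha \in \Lim{(C^* \cap D)}$ and $\alpha \in M$, then $\alpha$ is a limit point of $C^*$ lying in $M$. But Claim \ref{ordinals_not_in_m} says precisely that this happens for only boundedly many $\alpha \in \Lim{C^*}$, hence for only boundedly many $\alpha \in \Lim{(C^* \cap D)}$. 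This contradicts the assumed cofinality of the set of bad $\alpha$, and the claim follows.

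**Why this works and what to watch out for.** The essential content is already packaged in Claims \ref{ordertypes_not_in_m} and \ref{ordinals_not_in_m}; the present claim is a one-line consequence once one notes that elements of $\Lim{(C^* \cap D)}$ are in particular elements of $\Lim{C^*}$. The only point requiring a moment's care is the reading of the notation: $\alpha^{\uparrow}$ is the least element of $M$ that is $\geq \alpha$, so $\alpha^{\uparrow} = \alpha$ is equivalent to $\alpha \in M$, and $\alpha^{\uparrow} > \alpha$ is equivalent to $\alpha \notin M$. Thus "for all sufficiently large $\alpha \in \Lim{(C^* \cap D)}$, $\alpha^{\uparrow} > \alpha$" is literally just "for all sufficiently large $\alpha \in \Lim{(C^* \cap D)}$, $\alpha \notin M$," which is immediate from Claim \ref{ordinals_not_in_m} restricted to the smaller (still cofinal) set $\Lim{(C^* \cap D)}$.

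**The main obstacle.** Honestly there is no real obstacle here — this claim is a bookkeeping step whose purpose is to isolate, for use later in the proof, the fact that passing to $\alpha^{\uparrow}$ genuinely moves $\alpha$ upward on a tail of $\Lim{(C^* \cap D)}$. The one thing I would be slightly careful about is making sure $D$ (and hence $C^* \cap D$) is genuinely a subset of $\sup{(M \cap \lambda^+)}$ with the right cofinality, so that "cofinally many" / "all sufficiently large" statements about $\Lim{(C^* \cap D)}$ are not vacuous; this is guaranteed by the choice of $D$ as a club in $\sup{(M \cap \lambda^+)}$ of order type $\kappa^+ = \cf(\sup(M \cap \lambda^+))$ consisting of limits of ordinals in $M$.
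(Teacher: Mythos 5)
Your proof is correct and is essentially the paper's argument: the paper simply notes the claim is immediate from Claim \ref{ordinals_not_in_m}, and your write-up just makes explicit the two observations involved, namely that $\Lim{(C^* \cap D)} \subseteq \Lim{C^*}$ and that $\alpha^{\uparrow} > \alpha$ is equivalent to $\alpha \notin M$.
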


\begin{proof}
	Immediate from Claim \ref{ordinals_not_in_m}. 
\end{proof}

Now define: 

\begin{align*}
	Z = \curly{\mu \leq \lambda \colon \mu = \cf{(\alpha^{\uparrow})} \text{ for unboundedly many $\alpha$ in 
		$\Lim{(C^* \cap D)}$}}
\end{align*}

\begin{claim}
	$|Z| \leq \kappa$.
	\label{cardinality_of_z}
\end{claim}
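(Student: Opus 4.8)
The plan is to bound $Z$ by the cardinality of $M \cap \lambda$, which is $\kappa$. The key observation is that each $\mu \in Z$ arises as $\cf(\alpha^{\uparrow})$ for some $\alpha^{\uparrow} \in M$ with $\alpha^{\uparrow} \leq \sup(M \cap \lambda^+)$, so it suffices to show that $\mu$ itself — or some associated ordinal that codes $\mu$ — must lie in $M \cap \lambda$. First I would note that for $\alpha \in \Lim(C^* \cap D)$ with $\alpha^{\uparrow} > \alpha$ (which holds for all sufficiently large such $\alpha$ by Claim \ref{alpha_prime_bigger}), the ordinal $\beta := \alpha^{\uparrow}$ is a member of $M$ below $\lambda^+$, and hence $\cf(\beta)$, being definable from $\beta$ in $\langle H(\theta), \in, \vec{\mathcal{C}}\rangle$, is an element of $M$. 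Since $\cf(\beta) \leq \lambda$ and $\cf(\beta)$ is a cardinal (so in particular an ordinal $\leq \lambda$ lying in $M$), we get $\cf(\beta) \in M \cap (\lambda + 1)$.

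Now the subtlety: $M \cap (\lambda+1)$ could a priori contain $\lambda$ itself, and it has size $\kappa$ only after we confirm $|M \cap (\lambda+1)| = |M \cap \lambda| = \kappa$ (adding the single point $\lambda$ changes nothing). So the set $\{\cf(\alpha^{\uparrow}) : \alpha \in \Lim(C^* \cap D) \text{ large}\}$ is contained in $M \cap (\lambda+1)$, a set of size $\kappa$. Since $Z$ is, by definition, a subset of this set (every $\mu \in Z$ is realized as such a cofinality, in fact unboundedly often, so certainly at least once for a large $\alpha$), we conclude $|Z| \leq \kappa$. I would write this as: $Z \subseteq \{\cf(\alpha^{\uparrow}) : \alpha \in \Lim(C^* \cap D)\} \subseteq M$, and the middle set has size at most $|M \cap \lambda^+|$-many inputs but lands in $M \cap (\lambda + 1)$, which has size $\kappa$.

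The main obstacle — really the only thing to be careful about — is making sure the cofinality function lands inside $M$. This requires that $\vec{\mathcal{C}}$ (or at least enough of the relevant structure) is a predicate of the model, which it is by the choice of $M \preceq \langle H(\theta), \in, \vec{\mathcal{C}}\rangle$; but in fact we only need $\in$, since $\cf$ is definable in $\langle H(\theta), \in \rangle$. The one genuine point is that we need $\alpha^{\uparrow} < \lambda^+$ (clear, as $\alpha^{\uparrow} \leq \sup(M \cap \lambda^+) < \lambda^+$) and that $\cf(\alpha^{\uparrow})$ is a genuine cardinal $\leq \lambda$; then elementarity of $M$ hands us $\cf(\alpha^{\uparrow}) \in M$, and since all ordinals of $M$ below $\lambda$ together with possibly $\lambda$ itself number at most $\kappa$, we are done. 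I expect this to be a short paragraph in the paper.
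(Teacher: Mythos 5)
Your argument is correct and is essentially the paper's own proof: each $\mu \in Z$ is $\cf(\alpha^{\uparrow})$ for some $\alpha \in \Lim(C^* \cap D)$, and since $\alpha^{\uparrow} \in M \cap \lambda^+$, elementarity puts its cofinality in $M \cap (\lambda+1)$, which has cardinality $\kappa$. The extra care you take (restricting to large $\alpha$ with $\alpha^{\uparrow} > \alpha$, and worrying about the point $\lambda$) is harmless but not needed, since $\alpha^{\uparrow} \in M$ by definition whenever it exists.
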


\begin{proof}
	For each $\alpha \in \Lim{(C^* \cap D)}$, $\alpha^{\uparrow}$ is an 
	element of $M$ below $\lambda^+$, and therefore its 
	cofinality is an element of $M \cap \paren{\lambda + 1}$, which 
	has cardinality $\kappa$. 
\end{proof}

	\begin{claim}
		There is $\mu \in Z$ with $\mu \geq \kappa^+$. 
	\end{claim}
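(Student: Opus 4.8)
The goal is to show that among the cofinalities $\mu = \cf(\alpha^\uparrow)$ that recur unboundedly often in $\Lim(C^* \cap D)$, at least one is $\geq \kappa^+$. Since $|Z| \leq \kappa$ by Claim \ref{cardinality_of_z}, the idea is a counting/pigeonhole argument: I would assume for a contradiction that every $\mu \in Z$ satisfies $\mu \leq \kappa$, and then derive that $\sup(M \cap \lambda^+)$ — which has cofinality $\kappa^+$ — is approached in a way that is incompatible with its cofinality. Concretely, for each $\alpha \in \Lim(C^* \cap D)$ with $\cf(\alpha^\uparrow) = \mu \leq \kappa$, fix (in $M$, by elementarity) an increasing cofinal map $f_\alpha \colon \mu \to \alpha^\uparrow$; by Lemma \ref{image_cofinal_in_sup_lemma}, $\sup(f_\alpha``(M \cap \mu)) = \sup(M \cap \alpha^\uparrow)$, and since $\mu \leq \kappa$ and $\mu \subseteq M$ is \emph{not} automatic, I must be careful — but $M \cap \mu$ has size $\leq \kappa$ regardless, so $\sup(M \cap \alpha^\uparrow)$ is the supremum of $\leq \kappa$ elements of $M$ each below $\alpha^\uparrow$.

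**The key step.** The heart of the matter: $\alpha \in D$ and $D$ consists of limits of ordinals of $M$, so $\alpha = \sup(M \cap \alpha) \leq \sup(M \cap \alpha^\uparrow)$; combined with the displayed equation this gives $\alpha = \sup(f_\alpha``(M\cap\mu))$ is a sup of $\leq \kappa$ ordinals each in $M$ and each $< \alpha$ (one checks they are $< \alpha$: each $f_\alpha(\xi)$ for $\xi \in M \cap \mu$ lies in $M$ and is $< \alpha^\uparrow$, hence is $\leq$ some element of $M \cap \alpha$ is not quite right — rather, since $\alpha \notin M$ by Claim \ref{ordinals_not_in_m}, any element of $M$ that is $< \alpha^\uparrow$ and in the range of $f_\alpha$ restricted to $M\cap\mu$ must actually be $< \alpha$, as $\alpha^\uparrow$ is the \emph{least} element of $M$ that is $\geq \alpha$). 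Thus every such $\alpha$ is a limit of a set of size $\leq \kappa$ of smaller ordinals each lying in the fixed set $E = M \cap \lambda^+$, i.e. $\cf(\alpha) \leq \kappa$ and moreover $\alpha \in \Lim(E)$. Now run the pigeonhole on $C^* \cap D$ itself: if all of $Z \subseteq \kappa + 1$, then since $|\Lim(C^* \cap D)|$ has cardinality $\kappa^+$ (its order type is cofinal in $\sup(M\cap\lambda^+)$ whose cofinality is $\kappa^+$) and $|Z| \leq \kappa$, some single $\mu \leq \kappa$ works for unboundedly many $\alpha$; fixing that $\mu$, the map $\alpha \mapsto \alpha^\uparrow$ together with $\alpha^\uparrow \mapsto \sup(\text{range of a fixed enumeration of } M \cap \mu \text{ under } f_{\alpha^\uparrow})$ exhibits a cofinal-in-$\sup(M\cap\lambda^+)$ set that is the image of something of size $\leq \kappa \cdot \kappa = \kappa$, contradicting $\cf(\sup(M \cap \lambda^+)) = \kappa^+$.

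**The main obstacle.** The delicate point is legislating the quantifiers so the contradiction is genuinely a cofinality-$\kappa^+$ violation rather than a restatement of Claim \ref{cardinality_of_z}. I expect the clean way is: suppose every $\mu \in Z$ has $\mu \le \kappa$; for each $\alpha^\uparrow$ arising as $\alpha$ ranges over (a tail of) $\Lim(C^*\cap D)$, we have $\cf(\alpha^\uparrow) = \cf(\alpha) \le \kappa$ — wait, more precisely $\cf(\alpha^\uparrow) \in Z$ for a cofinal set of $\alpha$, hence $\le \kappa$. But $\sup(M\cap\lambda^+) = \sup\{\alpha^\uparrow : \alpha \in \Lim(C^* \cap D)\}$ is then a $\kappa^+$-cofinal union of sets $M \cap \alpha^\uparrow$ each of which, being the union of a $\le\kappa$-sized cofinal image inside $M$, is "cofinally small"; one then builds a cofinal map from $\kappa^+ \times \kappa$ (index $\alpha$ by the $\kappa^+$ coordinate, index within by $\mu < \kappa$) whose range is unbounded in $\sup(M \cap \lambda^+)$ but which, by the assumption on $Z$ and a further pigeonhole fixing a single $\mu$, collapses to a $\le \kappa$-sized unbounded subset of $\sup(M\cap\lambda^+)$ — the desired contradiction with $\cf(\sup(M\cap\lambda^+)) = \kappa^+$. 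The bookkeeping to get a \emph{single} fixed $\mu$ and a \emph{single} fixed enumeration working on a cofinal set is the only real subtlety; everything else is the two folklore lemmas plus Claims \ref{ordinals_not_in_m} and \ref{cardinality_of_z}.
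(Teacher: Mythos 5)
Your overall strategy --- assume every $\mu \in Z$ is $\leq \kappa$ and contradict $\cf(\sup(M \cap \lambda^+)) = \kappa^+$ --- does not work, and it fails exactly at the step you defer as ``bookkeeping.'' Fixing a single $\mu \leq \kappa$ by pigeonhole and a single enumeration of $M \cap \mu$ shrinks nothing: each $\alpha$ in your unbounded set still comes with its own cofinal map $f_\alpha \in M$, and by Lemma \ref{image_cofinal_in_sup_lemma} the ordinal $\sup(f_\alpha``(M \cap \mu))$ is just $\sup(M \cap \alpha^{\uparrow}) = \alpha$ itself. So the ``collapsed'' cofinal set you produce is simply the set of $\alpha$'s you started with, of size $\kappa^+$; no $\leq \kappa$-sized cofinal subset of $\sup(M \cap \lambda^+)$ ever appears, and the hypothesis that all elements of $Z$ are $\leq \kappa$ places no constraint whatsoever on $\cf(\sup(M \cap \lambda^+))$. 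Relatedly, the conclusion you extract from your key step, that $\cf(\alpha) \leq \kappa$, is vacuous: every $\alpha \in \Lim(C^* \cap D)$ below $\sup(M \cap \lambda^+)$ already has cofinality $\leq \kappa$, since $C^* \cap D$ has order type $\kappa^+$.

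The missing idea is local, and it uses the cofinality of $\alpha$ as a lower bound rather than an upper bound. The paper fixes $\alpha \in \Lim(C^* \cap D)$ with $\cf(\alpha) = \kappa$ (such $\alpha$ are unbounded precisely because $\otp(C^* \cap D) = \kappa^+$), large enough for Claim \ref{alpha_prime_bigger}, so that $\sup(M \cap \alpha^{\uparrow}) = \alpha < \alpha^{\uparrow}$. Applying Lemma \ref{image_cofinal_in_sup_lemma} to an increasing cofinal $f \colon \cf(\alpha^{\uparrow}) \rightarrow \alpha^{\uparrow}$ lying in $M$, the image $f``(M \cap \cf(\alpha^{\uparrow}))$ is cofinal in $\alpha$, so the cofinality $\kappa$ of $\alpha$ is bounded by $\otp(M \cap \cf(\alpha^{\uparrow})) \leq \cf(\alpha^{\uparrow})$, which the paper uses to conclude $\cf(\alpha^{\uparrow}) > \kappa$. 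This yields unboundedly many $\alpha$ with $\cf(\alpha^{\uparrow}) \geq \kappa^+$, and only then does the pigeonhole you correctly identify (via Claim \ref{cardinality_of_z}, i.e.\ the cofinalities all lie in $M \cap (\lambda+1)$, a set of size $\kappa$) produce a single value $\geq \kappa^+$ occurring unboundedly, hence an element of $Z$. So your appeal to Claim \ref{cardinality_of_z} and Lemma \ref{image_cofinal_in_sup_lemma} is in the right spirit, but without the choice $\cf(\alpha) = \kappa$ and the resulting contradiction at a single $\alpha$, the argument does not go through.
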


	\begin{proof}
		By Claim \ref{cardinality_of_z}, it is enough to find unboundedly 
		many $\alpha \in \Lim{(C^* \cap D)}$ such that 
		$\cf{(\alpha^{\uparrow})} \geq \kappa^+$. 

		Fix any $\alpha \in \Lim{(C^* \cap D)}$ large enough for 
		Claim \ref{alpha_prime_bigger}, with 
		$\cf{(\alpha)} = \kappa$. Observe that there are unboundedly many 
		such $\alpha$ since the ordertype of $C^* \cap D$ is $\kappa^+$. 
		By choice of $\alpha$, $\sup(M \cap \alpha^{\uparrow}) = \alpha < \alpha^{\uparrow}$. 
		Then: 
		\begin{align*}
			\kappa = \cf{\alpha} < 
			 \cf{\alpha^{\uparrow}}
		\end{align*}
		by Lemma \ref{image_cofinal_in_sup_lemma}. 
	\end{proof}

	\begin{claim}
		$| Z | \geq 2$. 
	\end{claim}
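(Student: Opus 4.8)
The plan is to exploit the fact that we already have \emph{one} element of $Z$, namely the $\mu_0 \geq \kappa^+$ produced by the previous claim, so that it suffices to manufacture a second one; I will do this by assuming $Z = \{\mu_0\}$ and deriving a contradiction. The starting point is that $\Lim(C^* \cap D)$ is a club in $\delta^* := \sup(M \cap \lambda^+)$ of order type $\kappa^+$ (being a subset of $D$, of order type $\kappa^+$, which is closed and cofinal in $\delta^*$, of cofinality $\kappa^+$). Hence for every regular $\rho \leq \kappa$ the set of $\alpha \in \Lim(C^* \cap D)$ with $\cf(\alpha) = \rho$ is unbounded in $\delta^*$; I will use this for the two values $\rho = \omega$ and $\rho = \omega_1$, both of which are $\leq \kappa$ since $\kappa$ is uncountable.

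Next I would run, for each of $\rho = \omega, \omega_1$, the same pigeonhole used in Claim \ref{cardinality_of_z}: for all sufficiently large $\alpha$ the value $\cf(\alpha^{\uparrow})$ is a cardinal lying in $M \cap (\lambda + 1)$, a set of size $\kappa$; since the $\alpha \in \Lim(C^* \cap D)$ with $\cf(\alpha) = \rho$ are unbounded in $\delta^*$ and $\cf(\delta^*) = \kappa^+ > \kappa$, some single value $\nu_\rho \leq \lambda$ is realized as $\cf(\alpha^{\uparrow})$ for unboundedly many such $\alpha$ (otherwise $\delta^*$ would be a union of $\leq \kappa$ bounded sets). In particular $\nu_\rho \in Z$, so under the assumption $Z = \{\mu_0\}$ we must have $\nu_\omega = \nu_{\omega_1} = \mu_0$.

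The last step is where the real content is. Fix $\rho \in \{\omega, \omega_1\}$ and choose $\alpha \in \Lim(C^* \cap D)$ large enough for the preceding claims and for the bounds above, with $\cf(\alpha) = \rho$ and $\cf(\alpha^{\uparrow}) = \mu_0$. Exactly as in the proof of Claim \ref{alpha_prime_bigger}, $\alpha \notin M$ and $\sup(M \cap \alpha^{\uparrow}) = \alpha < \alpha^{\uparrow}$. Pick (inside $M$) an increasing map $f$ from $\mu_0 = \cf(\alpha^{\uparrow})$ onto a cofinal subset of $\alpha^{\uparrow}$; by Lemma \ref{image_cofinal_in_sup_lemma}, $\sup(f``(M \cap \mu_0)) = \sup(M \cap \alpha^{\uparrow}) = \alpha$, so $f``(M \cap \mu_0)$ is cofinal in $\alpha$, and since $f$ is order preserving it carries $(M \cap \mu_0, \in)$ isomorphically onto a cofinal subset of $\alpha$. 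Therefore $\cf(\alpha) = \cf(\otp(M \cap \mu_0))$. The right-hand side is a fixed ordinal, independent of $\alpha$ (and of $\rho$), whereas the left-hand side is $\omega$ for one choice of $\rho$ and $\omega_1$ for the other. This is a contradiction, so $Z \neq \{\mu_0\}$, i.e. $|Z| \geq 2$.

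I expect the only genuinely new point to be this final step: the realization that once $\cf(\alpha^{\uparrow})$ has been pinned to the single value $\mu_0$, Lemma \ref{image_cofinal_in_sup_lemma} forces the \emph{inner} cofinality $\cf(\alpha)$ to be constant as well, which clashes with the abundance of cofinalities available along the club $\Lim(C^* \cap D)$. Everything else (the order type of $\Lim(C^* \cap D)$, and the pigeonhole over $M \cap (\lambda+1)$) is routine bookkeeping parallel to the earlier claims.
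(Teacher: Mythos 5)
Your proof is correct and is essentially the paper's argument: both rest on the observation that, via Lemma \ref{image_cofinal_in_sup_lemma}, for sufficiently large $\alpha \in \Lim(C^* \cap D)$ one has $\cf(\alpha) = \cf\paren{\otp(M \cap \cf(\alpha^{\uparrow}))}$, so a single value of $\cf(\alpha^{\uparrow})$ cannot serve points $\alpha$ of two different cofinalities, and two distinct regular cofinalities $\leq \kappa$ occur unboundedly along the club because $\kappa$ is uncountable. The paper phrases this contrapositively (two unbounded sets $A_1, A_2$ of points of distinct fixed cofinalities $\eta_1, \eta_2 \leq \kappa$ must yield distinct outer cofinalities) rather than by contradiction with $\rho \in \{\omega, \omega_1\}$, but the content, including the pigeonhole through $M \cap (\lambda+1)$, is the same.
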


\begin{proof}
	By Claim \ref{cardinality_of_z}, it suffices to find disjoint $A_1, A_2 \subseteq \Lim{(C^* \cap D)}$
	such that $A_1$, $A_2$ are unbounded and for any $\alpha_1 \in A_1$, $\alpha_2 \in A_2$, we have 
	$\cf{(\alpha_1^{\uparrow})} \neq \cf{(\alpha_2^{\uparrow})}$. 

	To do so, choose distinct regular $\eta_1, \eta_2 \leq \kappa$. Observe that this 
	is possible since $\kappa$ is uncountable. Now let 
	\begin{align*}
		A_1 &= \curly{\alpha \in \Lim{(C^* \cap D)} \colon \cf{\alpha} = \eta_1} \\
		A_2 &= \curly{\alpha \in \Lim{(C^* \cap D)} \colon \cf{\alpha} = \eta_2}
	\end{align*}
	Clearly $A_1$, $A_2$ are disjoint and unbounded. Moreover, for any 
	$\alpha_1 \in A_1$, $\alpha_2 \in A_2$, we have $\cf{(\alpha_1^{\uparrow})} \neq 
	\cf{(\alpha_2^{\uparrow})}$ by Lemma \ref{image_cofinal_in_sup_lemma}. 
\end{proof}

Now to prove the theorem: 

Fix distinct $\mu_1, \mu_2 \in Z$ with $\mu_1 > \mu_2$ and $\mu_1 \geq \kappa^+$. 
Fix $\alpha_1, \alpha_2 \in \Lim{(C^* \cap D)}$, large enough for Claims 
\ref{ordertypes_not_in_m} and \ref{ordinals_not_in_m}
and with $\alpha_1 < \alpha_2$, so that 
$\cf{(\alpha_1^{\uparrow})} = \mu_1$ and $\cf{(\alpha_2^{\uparrow})} = \mu_2$. 
Fix $E \in M$ cofinal in $\alpha_2^{\uparrow}$ of ordertype $\mu_2$. 

Let 
\begin{align*}
	U = \big\{ \sup{(C \cap \alpha_1^{\uparrow}) + 1 \colon C \in \bigcup_{\xi \in E}{\mathcal{C}_{\xi}} \text{ with }
	\sup{(C \cap \alpha_{1}^{\uparrow})} < \alpha_1^{\uparrow} } \big\}
\end{align*}

\begin{claim}
	$\sup{(C^* \cap \alpha^{\uparrow}_{1}) + 1} \in U$. 
	\label{final_claim}
\end{claim}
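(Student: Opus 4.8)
The plan is to locate $C^* \cap \alpha_1^{\uparrow}$ as one of the clubs whose truncation at $\alpha_1^{\uparrow}$ is being collected in $U$. The natural candidate is $C^* \cap \xi$ for a suitable $\xi \in E$: since $C^*$ threads $\vec{\mathcal{C}}$, for any limit point $\xi$ of $C^*$ we have $C^* \cap \xi \in \mathcal{C}_\xi$, so if we can find $\xi \in E$ that is a limit point of $C^*$ with $\sup(C^* \cap \xi) = \sup(C^* \cap \alpha_1^{\uparrow})$ and this value strictly below $\alpha_1^{\uparrow}$, then $C := C^* \cap \xi$ witnesses membership of $\sup(C^* \cap \alpha_1^{\uparrow}) + 1$ in $U$. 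So the first step is to produce such a $\xi$.

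To do this, recall that $E \in M$ is cofinal in $\alpha_2^{\uparrow}$ of order type $\mu_2$, and that $\alpha_1 < \alpha_2 \le \alpha_1^{\uparrow}$... — more precisely $\alpha_1^{\uparrow} \le \alpha_2^{\uparrow}$ since $\alpha_1 < \alpha_2$ forces $\alpha_1^{\uparrow} \le \alpha_2^{\uparrow}$ (actually $\alpha_1 < \alpha_2$ and $\alpha_2 \notin M$ by Claim \ref{ordinals_not_in_m} give $\alpha_1^{\uparrow} \le \alpha_2^{\uparrow}$; one should also check $\alpha_1^{\uparrow} < \alpha_2^{\uparrow}$ is not needed). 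Since $E$ is cofinal in $\alpha_2^{\uparrow}$ and $\alpha_1^{\uparrow} \le \alpha_2^{\uparrow}$ with $\alpha_1^{\uparrow} \in M$, the tail $E \setminus \alpha_1^{\uparrow}$ is nonempty (if $\alpha_1^{\uparrow} < \alpha_2^{\uparrow}$) or one argues directly; in any case, because $C^*$ is club in $\sup(M \cap \lambda^+) \ge \alpha_2^{\uparrow}$ and $\alpha_1 \in \operatorname{Lim}(C^*)$ with $\alpha_1 < \alpha_1^{\uparrow}$, the key point is that $\sup(C^* \cap \alpha_1^{\uparrow}) = \alpha_1$: indeed $\alpha_1 \in C^*$, and any element of $C^*$ strictly between $\alpha_1$ and $\alpha_1^{\uparrow}$ would have to be reflected... no — rather, one uses that $\alpha_1 \in D$ and $D$ consists of limits of points of $M$, together with $\alpha_1^{\uparrow}$ being the least element of $M$ above $\alpha_1$, to conclude $C^*$ has no points in the interval $(\alpha_1, \alpha_1^{\uparrow})$ that matter, giving $\sup(C^* \cap \alpha_1^{\uparrow}) = \alpha_1 < \alpha_1^{\uparrow}$. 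Now pick any $\xi \in E$ with $\xi \ge \alpha_1^{\uparrow}$ (cofinality of $E$ in $\alpha_2^{\uparrow}$) that is additionally a limit point of $C^*$ — such $\xi$ exists because $\operatorname{Lim}(C^*)$ is club in $\sup(M \cap \lambda^+)$ and $E$ is cofinal there, so their limit points overlap; if one cannot get $\xi$ itself in $\operatorname{Lim}(C^*)$, replace $\xi$ by $\sup(C^* \cap \xi')$ for a large $\xi' \in E$, which lies in $\operatorname{Lim}(C^*) \cap M$-closure appropriately. Then $C^* \cap \xi \in \mathcal{C}_\xi \subseteq \bigcup_{\xi' \in E}\mathcal{C}_{\xi'}$ and $\sup((C^* \cap \xi) \cap \alpha_1^{\uparrow}) = \sup(C^* \cap \alpha_1^{\uparrow}) = \alpha_1 < \alpha_1^{\uparrow}$, so $\alpha_1 + 1 = \sup(C^* \cap \alpha_1^{\uparrow}) + 1 \in U$, which is the claim.

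The main obstacle I expect is the bookkeeping around $E$ and $\alpha_1^{\uparrow}$ versus $\alpha_2^{\uparrow}$: I need $E$ to actually reach past $\alpha_1^{\uparrow}$ and to contain (or have as a limit of its truncations) a limit point of $C^*$ at which $C^*$'s truncation coincides, below $\alpha_1^{\uparrow}$, with $C^* \cap \alpha_1^{\uparrow}$. This is where the choice of $\alpha_1, \alpha_2$ large for Claims \ref{ordertypes_not_in_m} and \ref{ordinals_not_in_m}, the fact that $D$ consists of limits of points of $M$, and the coherence (threading) property of $\vec{\mathcal{C}}$ all have to be combined carefully; the cofinality hypotheses $\cf(\alpha_1^{\uparrow}) = \mu_1 \ge \kappa^+ > \mu_2 = \cf(\alpha_2^{\uparrow})$ are presumably what guarantee $\alpha_1^{\uparrow} < \alpha_2^{\uparrow}$ and hence that $E$ genuinely passes through $\alpha_1^{\uparrow}$, making the construction of $\xi$ possible. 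Once $\xi$ is in hand, verifying that the resulting truncation lies in $U$ is immediate from the definition of $U$ and coherence.
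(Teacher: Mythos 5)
Your overall strategy coincides with the paper's: use coherence to exhibit some $\xi \in E$ that is a limit point of $C^*$ lying above $\alpha_1^{\uparrow}$, so that $C^* \cap \xi \in \mathcal{C}_\xi$ witnesses $\sup{(C^* \cap \alpha_1^{\uparrow})}+1 \in U$. But the step that actually produces such a $\xi$ is the whole content of the claim, and neither of your two routes to it works. The ``limit points overlap'' argument fails because $E$ is cofinal in $\alpha_2^{\uparrow}$ while $C^* \cap \alpha_2^{\uparrow}$ is \emph{bounded} in $\alpha_2^{\uparrow}$ (otherwise $\alpha_2^{\uparrow}$ would be a sufficiently large limit point of $C^*$ belonging to $M$, contradicting Claim \ref{ordinals_not_in_m}); so $\Lim{(C^*)}$ is not cofinal in $\alpha_2^{\uparrow}$ and a generic ``two clubs meet'' appeal cannot yield $\xi \in E \cap \Lim{(C^*)}$ above $\alpha_1^{\uparrow}$. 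The fallback of replacing $\xi$ by $\sup{(C^* \cap \xi')}$ for large $\xi' \in E$ produces an ordinal that may be a limit point of $C^*$ but need not lie in $E$, and membership in $U$ requires a club from $\mathcal{C}_\xi$ for some $\xi \in E$; coherence tells you nothing about $\mathcal{C}_{\xi'}$ when $\xi' \in E$ is not a limit point of $C^*$.

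The missing idea (the paper's key move) is that $\alpha_2$ itself is the required $\xi$: since $\alpha_2 \in D$ is a limit of ordinals of $M$ and $\alpha_2^{\uparrow}$ is the least element of $M$ above it, $\sup{(M \cap \alpha_2^{\uparrow})} = \alpha_2$; since $E \in M$ is club in $\alpha_2^{\uparrow}$, elementarity makes $E \cap M$ cofinal in $\alpha_2$, and closedness of $E$ then gives $\alpha_2 \in E$. As $\alpha_2 \in \Lim{(C^*)}$, coherence gives $C^* \cap \alpha_2 \in \mathcal{C}_{\alpha_2}$, and since $\alpha_2$ is a limit of $M$-ordinals above $\alpha_1$ we get $\alpha_1^{\uparrow} < \alpha_2$, hence $(C^* \cap \alpha_2) \cap \alpha_1^{\uparrow} = C^* \cap \alpha_1^{\uparrow}$. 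Two further points: your assertion $\sup{(C^* \cap \alpha_1^{\uparrow})} = \alpha_1$ is neither justified (nothing rules out points of $C^*$ in the interval $(\alpha_1, \alpha_1^{\uparrow})$) nor needed; what is needed is only $\sup{(C^* \cap \alpha_1^{\uparrow})} < \alpha_1^{\uparrow}$, which again follows from Claim \ref{ordinals_not_in_m} because $\alpha_1^{\uparrow} \in M$. And the comparison of cofinalities $\mu_1 > \mu_2$ plays no role here; it is used later, to bound $|U|$, not to locate $\xi$.
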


\begin{proof}
	Note first that $C^* \cap \alpha^{\uparrow}_{1}$ is bounded in $\alpha^{\uparrow}_{1}$, since 
	otherwise we would have $\alpha^{\uparrow}_1 \in \Lim{C^*}$, and as 
	$\alpha_1^{\uparrow} > \alpha_1$ is an element of $M$ this would contradict choice 
	of $\alpha_1$. Now since $E$ is club in $\alpha^{\uparrow}_{2}$ and belongs to 
	$M$, we have $\alpha_2 = \sup{(M \cap \alpha_2^{\uparrow})} \in E$, where the equality 
	follows from Lemma \ref{image_cofinal_in_sup_lemma}. Since 
	$C^*\cap \alpha_2 \in \mathcal{C}_{\alpha_2}$, and since 
	$C^* \cap \alpha_2 \cap \alpha^{\uparrow}_1 = C^* \cap \alpha^{\uparrow}_1$ is bounded in 
	$\alpha^{\uparrow}_1$, it follows by definition that $\sup{(C^* \cap \alpha^{\uparrow}_1)} + 1$ is 
	in $U$. 
\end{proof}

We have $U \in M$ by elementarity and since the parameters used are in $M$. 
$U$ has cardinality $< \mu_1$ by definition and since $\mu_1 > \max{(\mu_2, \kappa)}$. 
Since $\cf{(\alpha^{\uparrow}_1)} = \mu_1$, it follows that $U$ is bounded in 
$\alpha^{\uparrow}_{1}$. 

Moreover, since $U \in M$ we have $\sup{U} \in M$, and since 
$\sup{U} < \alpha^{\uparrow}_{1}$ it follows that 
$\sup{U} \leq \alpha_1$. But this contradicts Claim \ref{final_claim}, 
since $\alpha_1 \in C^*$ and therefore
\begin{align*}
	\sup{(C^* \cap \alpha^{\uparrow}_{1}) + 1 \geq \alpha_{1} + 1 > \alpha_1}
\end{align*}

\end{proof}

\medskip

\end{document}